\theoremstyle{plain}
\newtheorem{teo}{Theorem}
\newtheorem{prop}[teo]{Proposition}
\newtheorem{cor}[teo]{Corollary}
\theoremstyle{definition}
\theoremstyle{remark}
\newtheorem{oss}[teo]{Remark}
\newcommand{\rp}[1]{\ensuremath{\mathbb{RP}^{#1}}}
\newcommand{\C}{\ensuremath{\mathbb{C}}}
\newcommand{\s}[1]{\ensuremath{\mathbf{S}^{#1}}}
\newcommand{\z}[1]{\ensuremath{\mathbb{Z}_{#1}}}
\begin{document}

\title{Diffeomorphic vs isotopic links in lens spaces}


\author{Alessia Cattabriga, Enrico Manfredi}

\maketitle

\begin{abstract}
Links in lens spaces may be defined to be equivalent by ambient isotopy or by diffeomorphism of pairs. In the first case, for all the combinatorial representations of links,  there is a set of Reidemeister-type moves on diagrams  connecting isotopy equivalent links. In this paper we provide a set of moves on disk, band and grid  diagrams that connects diffeo equivalent links: there are up to four isotopy equivalent links in each diffeo equivalence class. Moreover, we investigate how the diffeo equivalence relates to the lift of the link in the $3$-sphere: in the particular case of oriented primitive-homologous knots, the lift completely determines the knot class in $L(p,q)$ up to diffeo equivalence, and thus only four possible knots up to isotopy equivalence can have the same lift.
 \\
\\ {{\it Mathematics Subject
Classification 2010:} Primary 57M27, 57M10; Secondary 57M25.\\
{\it Keywords:} knots/links, lens spaces, link equivalence, diffeotopy group, Reidemeister-type moves, lift.}\\

\end{abstract}

\begin{section}{Introduction}

\paragraph{Interest of links in lens spaces}
Lens spaces are an infinite family of closed orientable 3-manifolds obtained as cyclic quotients of $\s3$. They were  introduced in  \cite{T} at the beginning of the 20th century   and, since then,  a rich theory of invariants and representations for this manifolds has been developed and is now well established. So, in the last decades,  the research focused on  the study of links, i.e. closed 1-submanifold, embedded   in these manifolds:   the techniques of representation developed for lens spaces   produced  different notions of diagrams for the links inside them, establishing a connection  with the widespread theory of  links  in $\s3$ and  determining the possibility of extending a lot of classical invariants (see for example \cite{BGH, CMM, Co, Dr, GM, GM2, G1,G2,HP}).

The methods developed and the results achieved have revealed to be useful also to study   problems not directly formulated in terms of links in lens spaces. Recently, in \cite{BGH,He} important steps toward the proof of the long standing Berge conjecture (characterizing  the class of knots in $\s3$ admitting  lens spaces surgery) have been done by rephrasing it  using  grid diagrams representation for knots in lens  spaces and  knot Floer homology. Also applications outside mathematics seem to be promising (see \cite{BM, Ste}).

\paragraph{Main results of the paper and their contextualization}
In this paper we investigate the general problem of the equivalence  for link in lens spaces. As in the case of $\s3$, there are, at least, two possible notions of equivalence: under  ambient isotopy or under diffeomorphism of pairs.  In $\s3$ the two notions are easily related by the mirroring operation: that is, if two links are diffeo equivalent then either they are also isotopy equivalent or one is isotopic to the mirror image of the other. This is due to the fact that the diffeotopy group of $\s3$ is generated by an involution: in lens space the situation is more complicated since the diffeotopy group may be larger, and the action of the generators of the group on a diagram  of a link is not straightforward as in $\s3$.  Moreover,   all the combinatorial  representations of links in lens spaces deal with the isotopy equivalence and so  only  isotopy moves are described.  On the contrary, here we study the action of the elements of the diffeotopy group on three different kind of diagrams for  links:  disk \cite{CMM}, band \cite{Gb, HP} and grid \cite{BG} one.
We  provide a set of diffeo moves, that added to the already known isotopy ones, realize the combinatorial diffeo equivalence for links:  Figures \ref{t}, \ref{s+} and \ref{s-} describe them  for disk diagrams, Figure \ref{pt}, \ref{bandmove+} and ~\ref{bandmove-} are for band diagrams, while Figures \ref{g1},  \ref{g2} and  \ref{g3} are in the setting of grid diagrams.  The reason for considering different representations is that each of them, having its  own features,  has been used  fruitfully in literature to investigate different problems or  invariants. 



Our result, beside filling a gap, is useful when dealing with  issues in which the diffeo equivalence is the natural one. For example, in the study of     cosmetic couples,  (see \cite{BHW, Mat}),   link complement (see \cite{C, Ga}) and lift of the link  (see \cite{C, M1}). In this paper we investigate the lift of the link  that is, its  counterimage in $\s3$, under the universal cyclic covering map $\s3\to L(p,q)$. In \cite{BGH}  a grid diagram for the lift is constructed starting from a grid diagram of the link, while in \cite{M1}, the second author  uses disk diagrams to investigate whether the lift is a complete invariant or not. The general answer is negative both up to isotopy and diffeomorphism: the easiest example is given by the two axis of a lens space  $L(p,q)$ (i.e. the two core of the solid tori of the Heegaard decomposition): they both lift to the trivial knot but they are diffeo equivalent if and only if $q^2\equiv \pm 1\mod p$ (see \cite{Ma}). In \cite{M1} an example of a knot and a link with two components in $L(4,1)$ both lifting to the Hopf link is given. Moreover, using the cabling operation, this example is generalized to an  infinite family of couples of  links with the same number of components,  same homology class and same lifting. Even if the lift in general is not a complete invariant,   it may be classifying for some family of links.  A positive answer  in this direction is given  in \cite{C}, where it is shown that the lift is a complete invariant, up to isotopy, for the family of  knots in $L(2,1)\cong\mathbb{RP}^3$ lifting to a non trivial knot. In this paper, we deal with the same family, but in a general lens space: using results of \cite{BF,Sa} and the diffeo moves, we completely characterize, up to isotopy,  which knots among the family have the same lift,   in terms of the parameters of the lens space and of the amphicheirality of the lift (see Theorem	\ref{liftteo}).

\paragraph{Further development}

In \cite{Gb2}  a tabulations  of knots, represented via band diagrams and  up to isotopy equivalence is given. Diffeo-moves can be used to detect diffeo equivalent knots in the tabulation. For example, the two  knots $5_{26}$ and  $5_{27}$, which are non-isotopic in $L(p,q)$ with $p\leq 12$ and  conjecturally non-isotopic in the other lens spaces, are related by a  $\tau$ move. 

Another interesting development could be to study the diffeo moves in more general manifolds such Seifert one, for which isotopy equivalence moves are already known (see \cite{GM1}).

Regarding the lift there are two possible direction of investigation: the case in which the lift of the knot is a link with more then one component and the case in which the lift is the trivial knot. In \cite{C} the author
 deals with the second case for knots in the  projective space $L(2,1)$.

\paragraph{Paper organization}

In Section 2 we review the definition of lens spaces and of links in lens spaces, using disk,  band and grid diagrams, then we recall the isotopy equivalence of links in lens spaces and the set of Reidemeister-type moves that allows to understand when two disk,  band or diagrams represent isotopic links.

In Section 3 we provide new moves that state when two disk diagrams are equivalent up to diffeomorphism. These moves are also  translated into the setting of  band and grid diagrams.

Finally, in Section 4 we recall the lift construction and show that for primitive-homologous knots there are up to four different isotopy classes that have equivalent lift.

\end{section}

\section{Prerequisites and isotopy equivalence}
We will work in the category \emph{Diff} of differential manifold and differential maps. It is equivalent to the category \emph{PL} of piecewise linear manifolds and maps, and to the category \emph{Top}, since we do not consider wild links. As a consequence, we will  mix up  diffeomorphisms with homeomorphisms as well as diffeotopies  with isotopies. Moreover,  in figures we will often use PL representations. 

\paragraph{A lens model for lens spaces}\label{lens}

Fix two integer numbers, $p$ and $q$, such that $\gcd(p,q)=1$ and $ 0 \leqslant q < p$. 

Lens spaces may be defined through a lens model: in the  the $3$-dimensional ball $B^3$, let $E_{+}$ and $E_{-}$ be, respectively, the upper and the lower closed hemisphere of $\partial B^{3}$. The equatorial disk $B^{2}_{0}$ is defined by the intersection of the plane $x_{3}=0$ with $B^{3}$. Label with $N$ and $S$ respectively the points $(0,0,1)$ and $(0,0,-1)$ of $B^{3}$.
Let \mbox{$g_{p,q} \colon E_{+} \rightarrow E_{+}$} be the counterclockwise rotation of $2 \pi q /p$ radians around the $x_{3}$-axis, as represented in Figure~\ref{L(p,q)}, and let \hbox{$f_{3} \colon E_{+} \rightarrow E_{-}$} be the reflection with respect to the plane $x_{3}=0$.

\begin{figure}[htb!]                      
\begin{center}                         
\includegraphics[width=7cm]{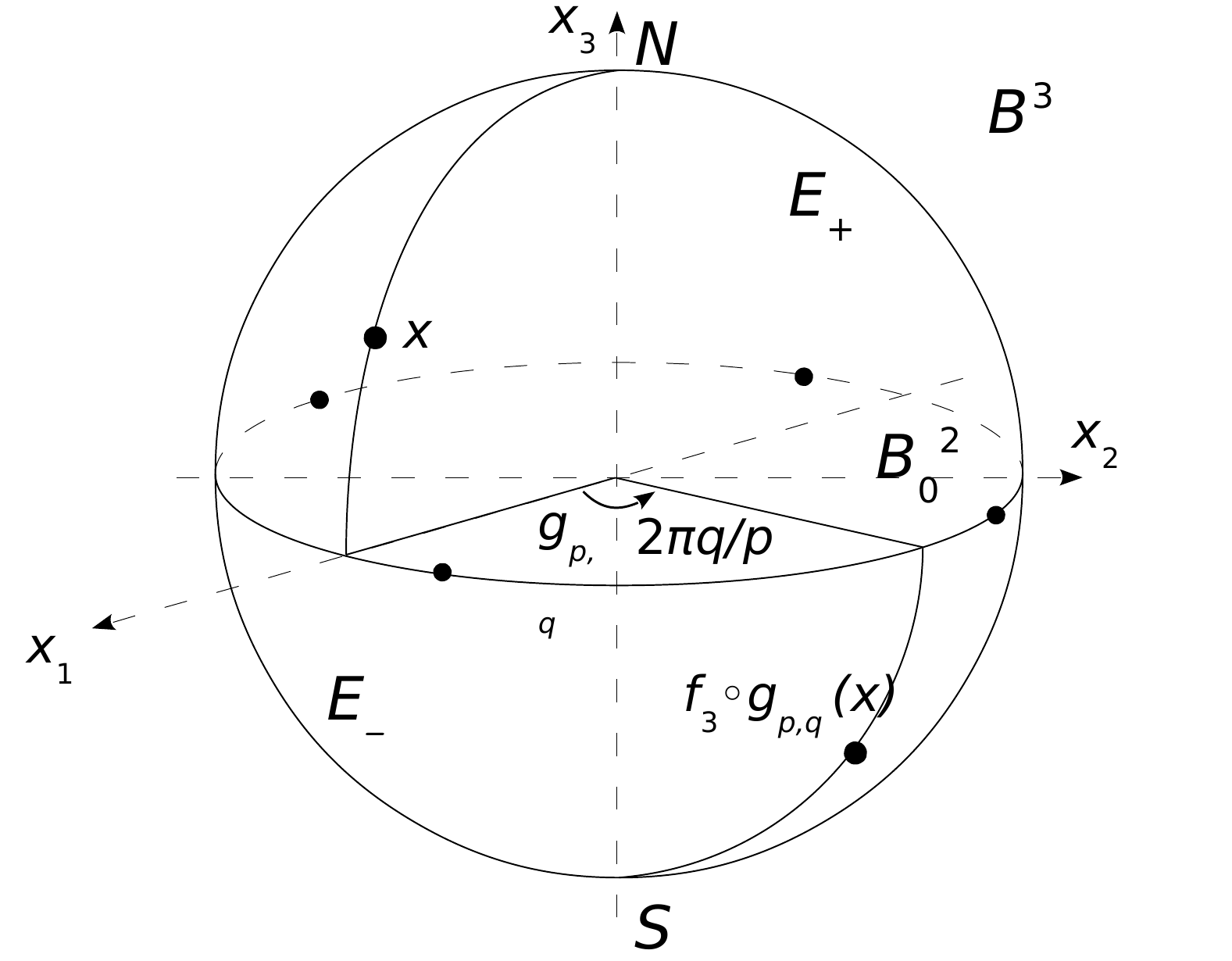}
\caption[legenda elenco figure]{Representation of $L(p,q)$ through lens model.}\label{L(p,q)}
\end{center}
\end{figure}

The \emph{lens space} $L(p,q)$ is the quotient of $B^{3}$ by the equivalence relation on $\partial B^{3}$ which identifies $x \in E_{+}$ with $f_{3} \circ g_{p,q} (x) \in E_{-}$. The quotient map is denoted by \mbox{$F \colon B^{3} \rightarrow B^{3} / \sim=L(p,q)$}. Note that, while generally on the boundary of the ball $B^{3}$ only two points are identified, on the equator \mbox{$\partial B^{2}_{0}=E_{+} \cap E_{-}$} each equivalence class contains $p$ points. We have $L(1,0)\cong \s{3}$ and  $L(2,1) \cong \rp{3}$.

\paragraph{Genus one Heegaard splitting model}\label{ls}

Heegaard splittings are one of the most powerful methods to represent closed orientable  $3$-manifolds (see \cite{Heg}). We can define   a genus $g$ Heegaard spitting of a given manifold either as a closed orientable genus $g$ surface embedded in the manifold  whose complement consists of two genus $g$ handlebodies, or  as an identification between the boundaries of two copies of the a genus $g$ handlebody through (the isotopy class of)  an orientation reversing  diffeomorphism giving as result the considered manifold. Lens spaces and $\s2\times \s1$ are those $3$-manifolds admitting a genus one Heegaard splittings: i.e., if  $V_{1}$ and $V_{2}$ are two copies of a solid torus $V=\s1 \times B^{2} \subset \C \times \C$, a genus one Heegaard splitting $V_{1}\cup_{\varphi_{p,q}}V_{2}$ of the lens space $L(p,q)$ is the gluing of the two solid tori $V_{1}$ and $V_{2}$ via the diffeomorphism of their boundaries $\varphi_{p,q} \colon \partial V_{2} \rightarrow \partial V_{1}$ that sends the  curve $\beta=\{\ast\} \times \partial B^{2}$ to the curve $q \beta +p \alpha$, where $\alpha=\s1\times \{\ast\}$, with $\ast$ a fixed point.
In Figure~\ref{lenshsx} it is illustrated the case $L(5,2)$  while Figure~\ref{QtoHS} explains, in the case of $L(5,2)$,  how to get the Heegaard splitting model starting from the lens one, and vice versa.

\begin{figure}[htb!]                      
\hspace{-10pt}
\begin{center}                         
\includegraphics[width=10cm]{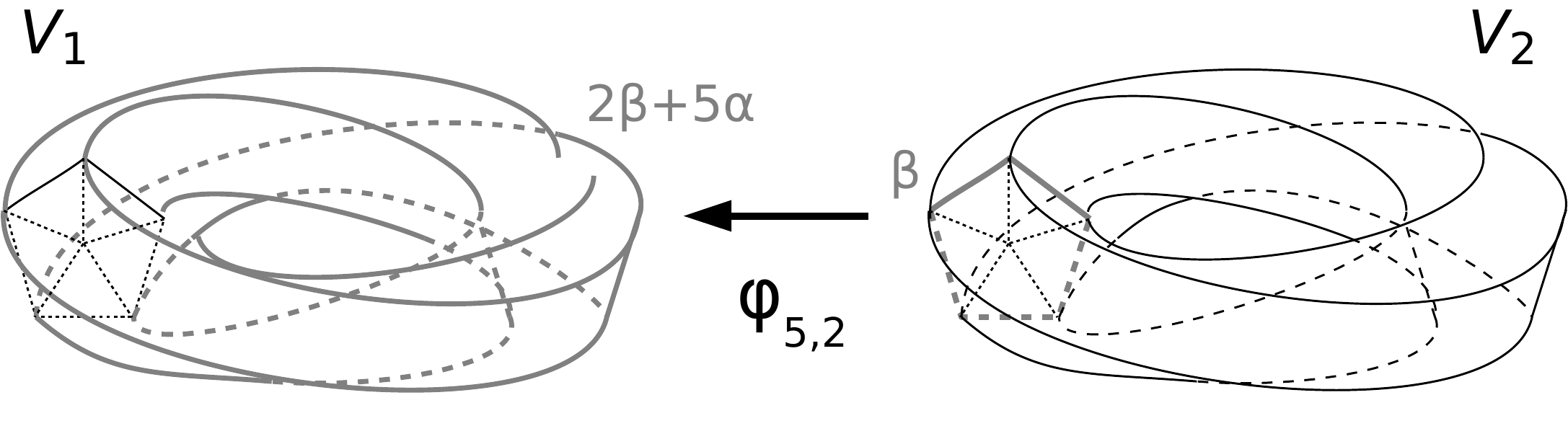}
\caption[legenda elenco figure]{Heegaard splitting of $L(5,2)$.}\label{lenshsx}
\end{center}
\end{figure}

\begin{figure}[h!]                      
\begin{center}                         
\includegraphics[width=12.2cm]{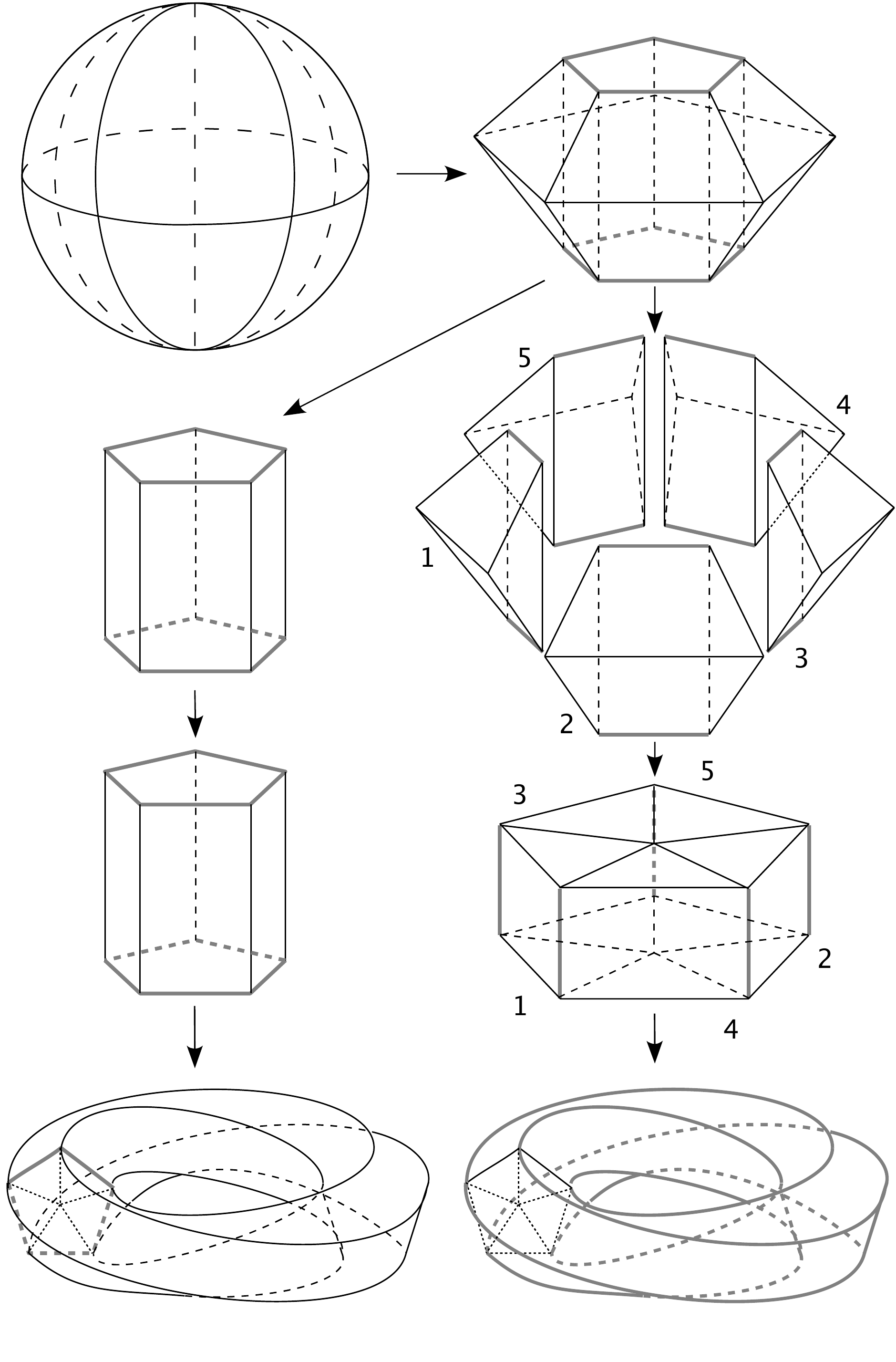}
\caption[legenda elenco figure]{From the lens model to Heegaard splitting one of $L(5,2)$.}\label{QtoHS}
\end{center}
\end{figure}

\paragraph{Links in lens spaces and isotopy equivalence} A link $L$ in a $3$-manifold $M$ is a pair $(M,L)$, where $L$ is a submanifold of $M$ diffeomorphic to the disjoint union of $\nu$ copies of $\s1$, with $\nu > 0$. A \emph{component} of $L$ is each connected component of the topological space $L$. When $\nu=1$ the link is called a \emph{knot}.
We usually refer to $L \subset M$ meaning the pair $(M,L)$.
A link $L \subset M$ is \emph{trivial} if its components bound embedded pairwise disjoint $2$-disks $B^{2}_{1}, \ldots, B^{2}_{\nu}$ embedded in $M$.   Links may be oriented or not, according to the need.

There are at least two possible definitions of link equivalence. At first, we focus on the equivalence by ambient isotopy: two links $L,L'\subset M$ are called \emph{isotopy equivalent} if there exists a continuous map $H\colon M \times [0,1] \rightarrow M$ where, if we define $h_{t}(x):=H(x,t)$, then $h_{0}=id_{M}$, $h_{1}(L)=L'$ and $h_{t}$ is a diffeomorphism of $M$ for each $t \in [0,1]$.


\paragraph{Disk diagram}\label{disk}

Since we are not interested in the case of $\s3$, we assume $p>1$. Intuitively, a \emph{disk diagram} of a link $L$ in $L(p,q)$, represented by the lens model, is a regular projection of $L'=F^{-1}(L)$ onto the equatorial disk of $B^{3}$, with the resolution of double points with overpasses and underpasses. 
In order to have a more comprehensible diagram, we label with $+1, \ldots, +t$ the endpoints of the projection of the link coming from the upper hemisphere, and with $-1, \ldots, -t$ the endpoints coming from the lower hemisphere, respecting the rule $+i \sim -i$. 
An example is shown in Figure~\ref{link3}. The rigorous definition can be found in \cite{CMM}.

\begin{figure}[h!]                      
\begin{center}                         
\includegraphics[width=8cm]{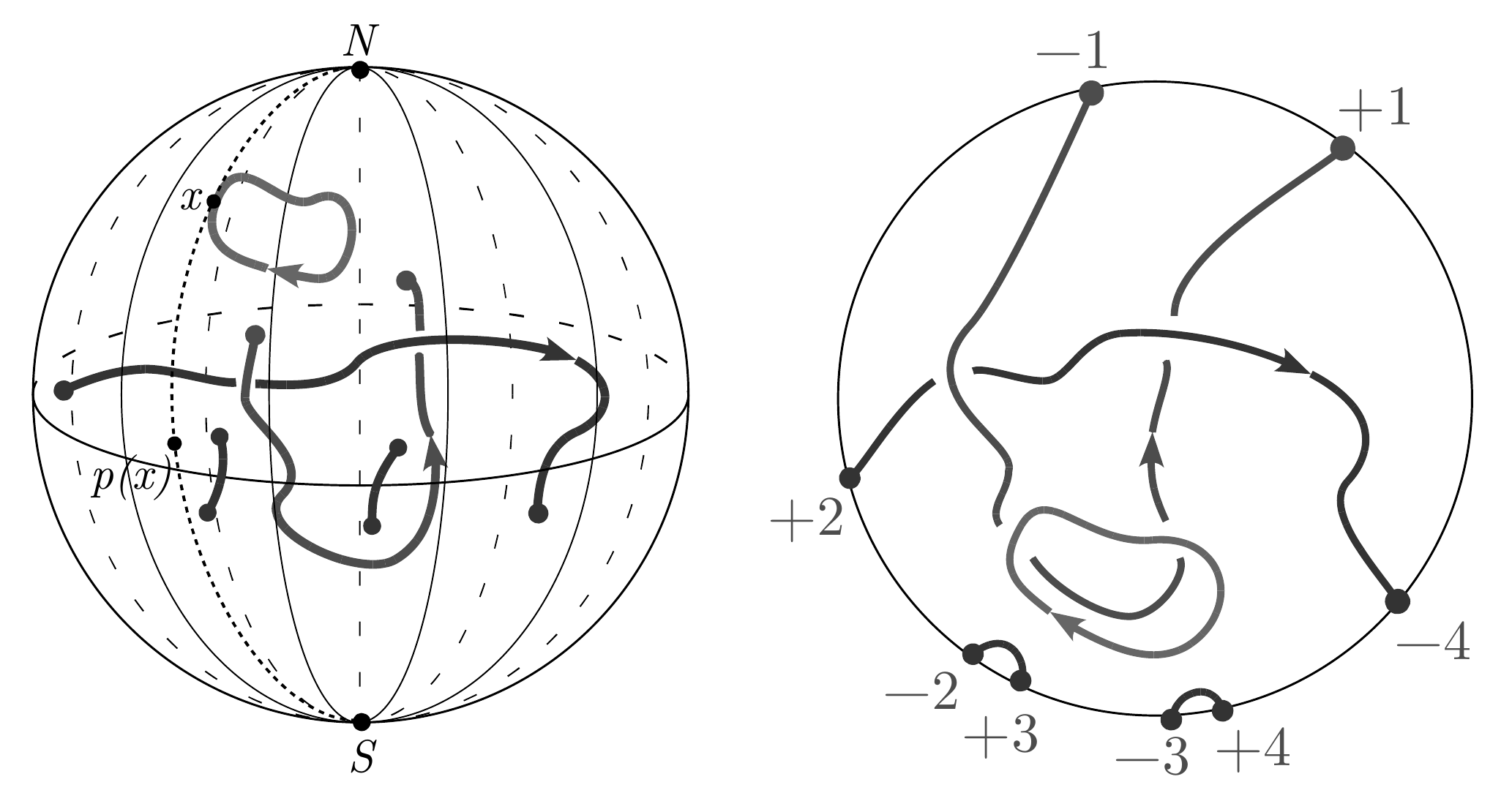}
\caption[legenda elenco figure]{A link in $L(9,1)$ and its corresponding disk diagram.}\label{link3}
\end{center}
\end{figure}

\paragraph{Generalized Reidemeister moves for disk diagrams}

In \cite{CMM}, a Reidemeister-type theorem is proved for disk diagrams of links in lens spaces.
The \emph{generalized Reidemeister moves} on a disk diagram of a link $L \subset L(p,q)$, are the moves $R_{1}, R_{2}, R_{3}, R_{4}, R_{5}, R_{6}$ and $R_{7}$ of Figure~\ref{vR1-R7}. 

\begin{figure}[tbh!]                      
\begin{center}                         
\includegraphics[width=11cm]{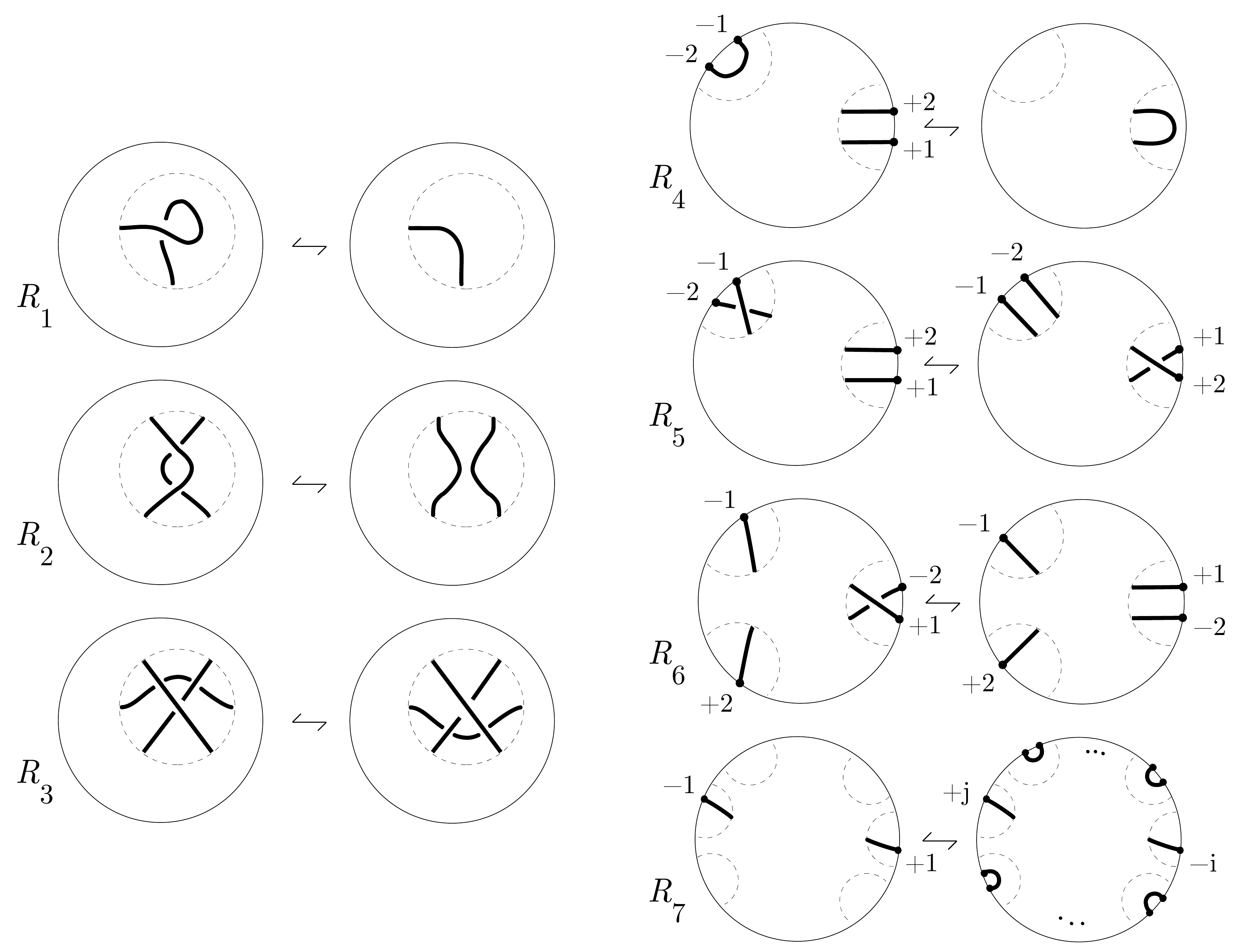}
\caption[legenda elenco figure]{Generalized Reidemeister moves.}\label{vR1-R7}
\end{center}
\end{figure}

Observe that, when $p=2$ (i.e., the ambient space is $\mathbb{RP}^3$),  the moves coincide with those described in \cite{Dr}, since $R_{5}$ and $R_{6}$ are equal, and $R_{7}$ is a trivial move.


A disk diagram is called to be \emph{standard} if the labels on its boundary points, read according to a fixed  orientation on $\partial B^{2}_{0}$, are $(+1, \ldots, +t, -1, \ldots, -t)$.  Every disk diagram can be reduced to a standard disk diagram using generalized Reidemeister moves (see \cite{M1}).

\paragraph{Band diagram}\label{band}

Another possible representation of links in lens spaces is band diagrams. Represent  $L(p,q)$ via genus one Heegaard splitting where we can think that each  solid torus $V_1,V_2$ is   embedded in $\mathbb R^3$ in a standard way (i.e., as  depicted in Figure \ref{lenshsx}). By general position theory, we can suppose that a link $L \subset L(p,q)$ is contained  inside one of the two solid tori, and thus we can regularly project $L$ onto the plane   containing  $\s1 \times \{0\}$ and add a  dot being the projection of the axis of rotational symmetry of the torus, as depicted in the left part of Figure \ref{banddiagram}. Such a representation is called \textit{punctured disk diagram}: in order to obtain a \textit{band diagram}  it is enough to cut open  the  punctured disk diagram with a ray starting from the dot, as depicted in the right part of  Figure \ref{banddiagram}. A rigorous definition can be found in \cite{GM}.

\begin{figure}[h!]                      
\centering    
\includegraphics[width=8cm]{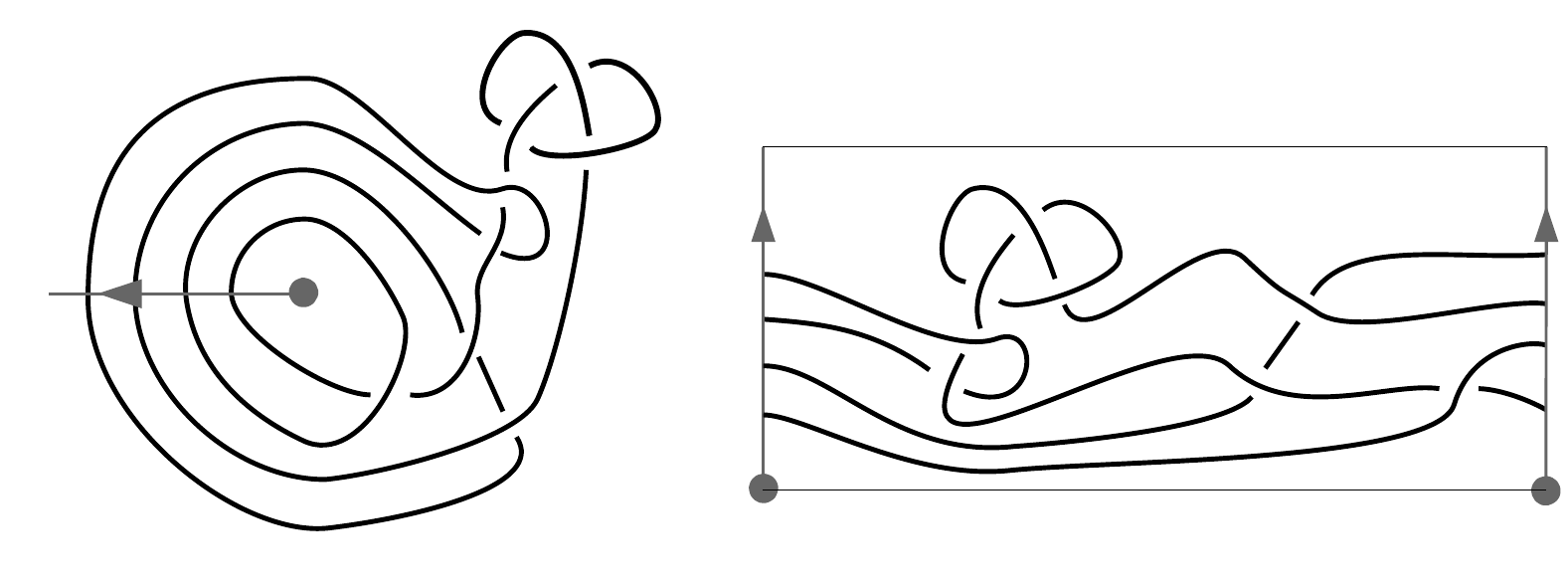}
\caption[legenda elenco figure]{Example of punctured disk and band diagram.}\label{banddiagram}
\end{figure}


\paragraph{Generalized Reidemeister moves for band diagrams}

For band diagrams the isotopy equivalence is established by the classical three Reidemeister moves together with a non local move, called  \textit{slide move} and depicted in Figure \ref{SLmove}, depending on the parameters $p,q$ and corresponding to an isotopy in which an arc of the link passes through the dot. 

\begin{figure}[h!]                      
\centering    
\includegraphics[width=9.5cm]{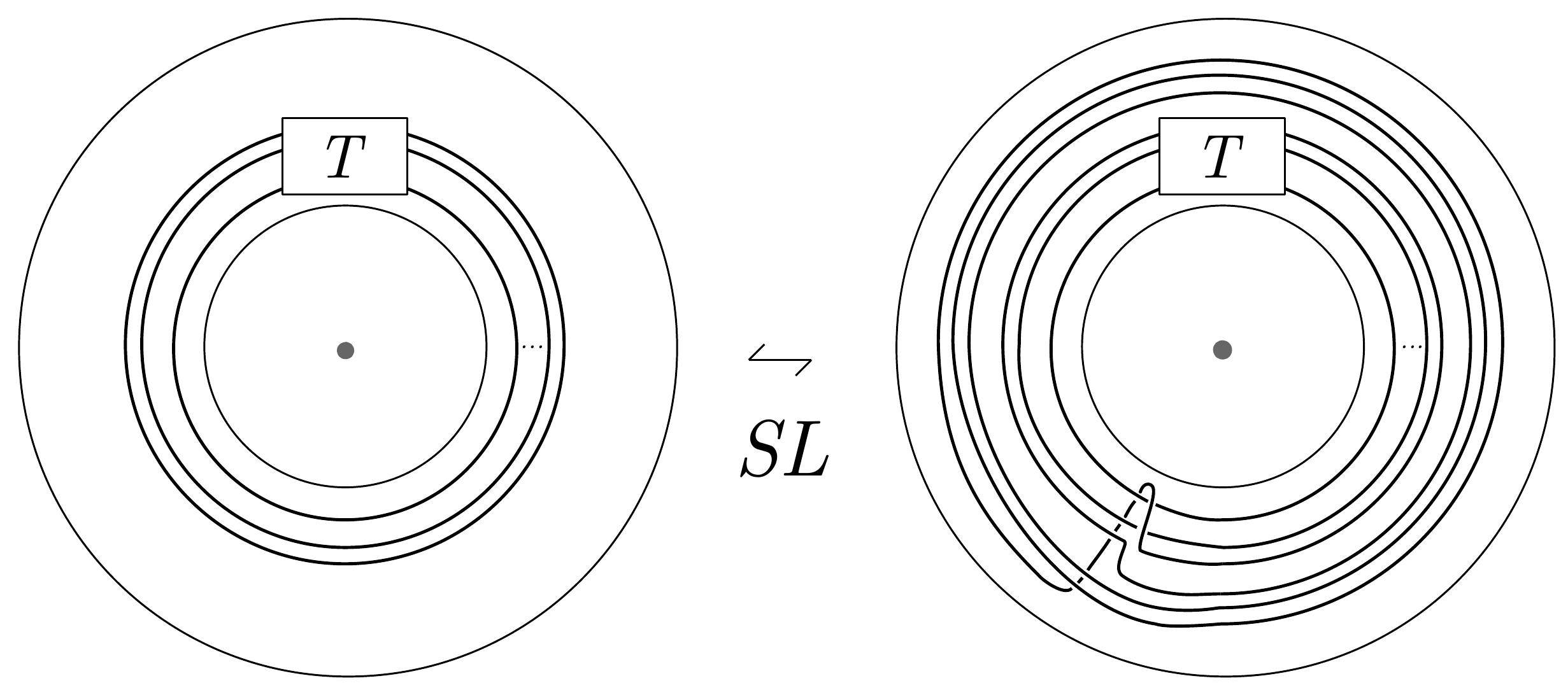}
\caption[legenda elenco figure]{Slide move for punctured disk/band diagrams}\label{SLmove}
\end{figure}

\paragraph{Transformation between disk and band diagrams}

The complete description of how to transform a band diagram into a disk diagram and vice versa can be found in \cite{GM}. Denote with
  $\Delta_{t}$ the element  $(\sigma_{1}\sigma_{2} \cdots \sigma_{t-1})( \sigma_{1}\sigma_{2} \cdots \sigma_{t-2}) \cdots (\sigma_{1})$ of the  braid group, where $\sigma_i$ are the standard Artin generators,    represented in Figure \ref{delta}  that corresponds to a half twist of all the strands. In Figures \ref{LB} and \ref{BL} are described  the  transformations between disc and band diagrams.

\begin{figure}[h!]                      
\centering    
\includegraphics[width=8cm]{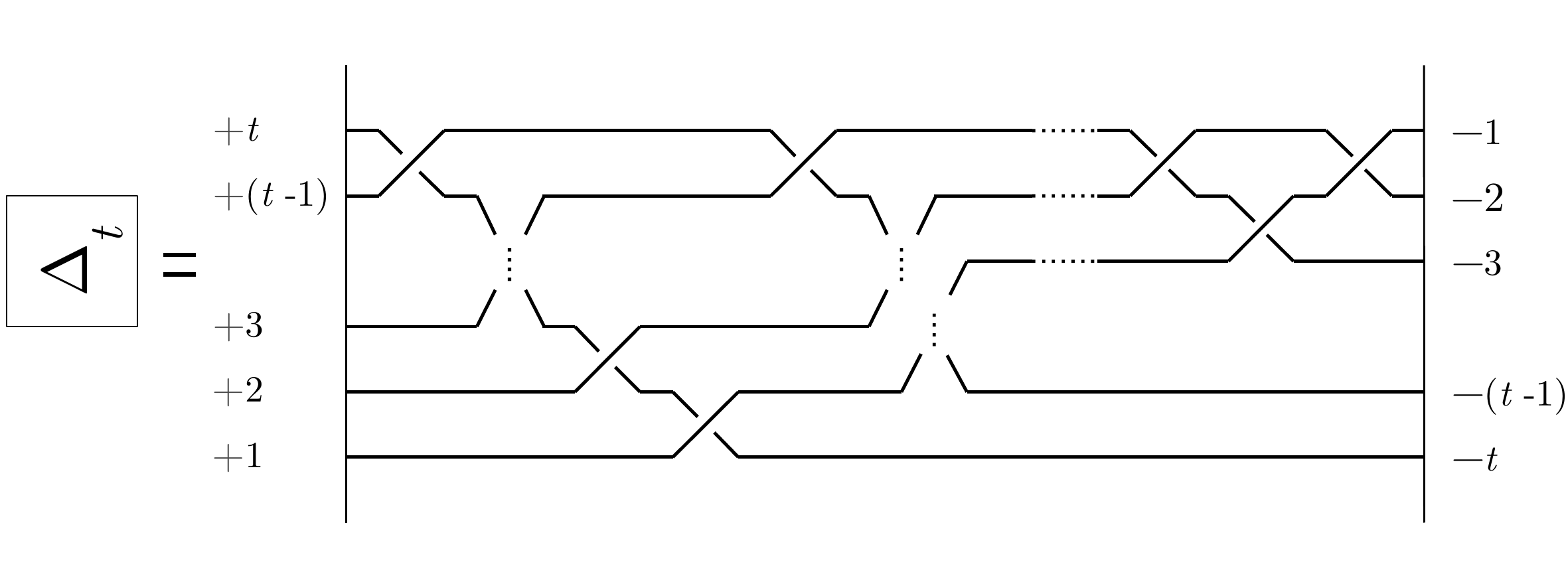}
\caption[legenda elenco figure]{The braid group element $\Delta_{t}$.}\label{delta}
\end{figure}

\begin{figure}[h!]                      
\centering    
\includegraphics[width=9cm]{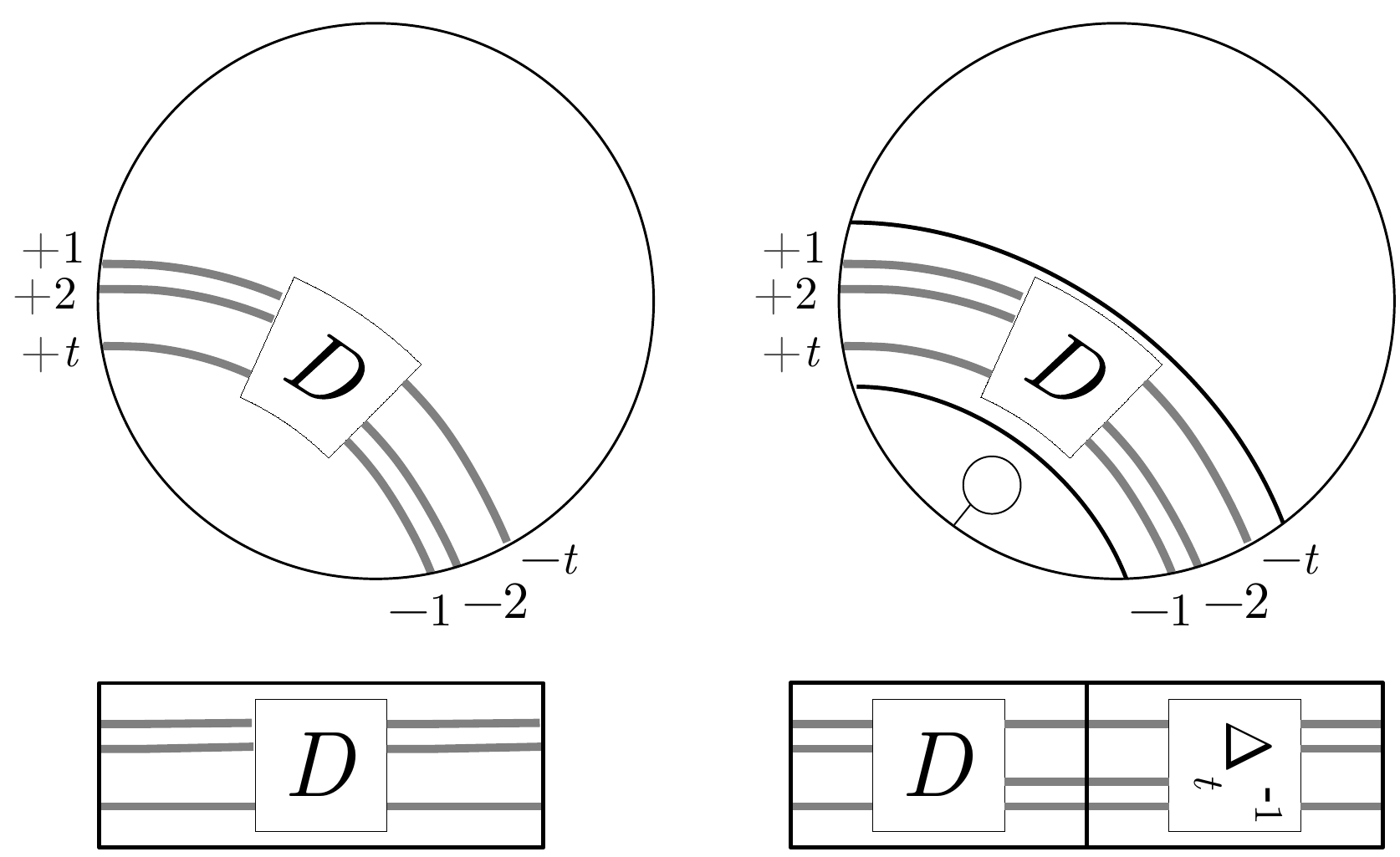}
\caption[legenda elenco figure]{Transformation from a disk diagram to a band diagram.}\label{LB}
\end{figure}

\begin{figure}[h!]                      
\centering    
\includegraphics[width=9cm]{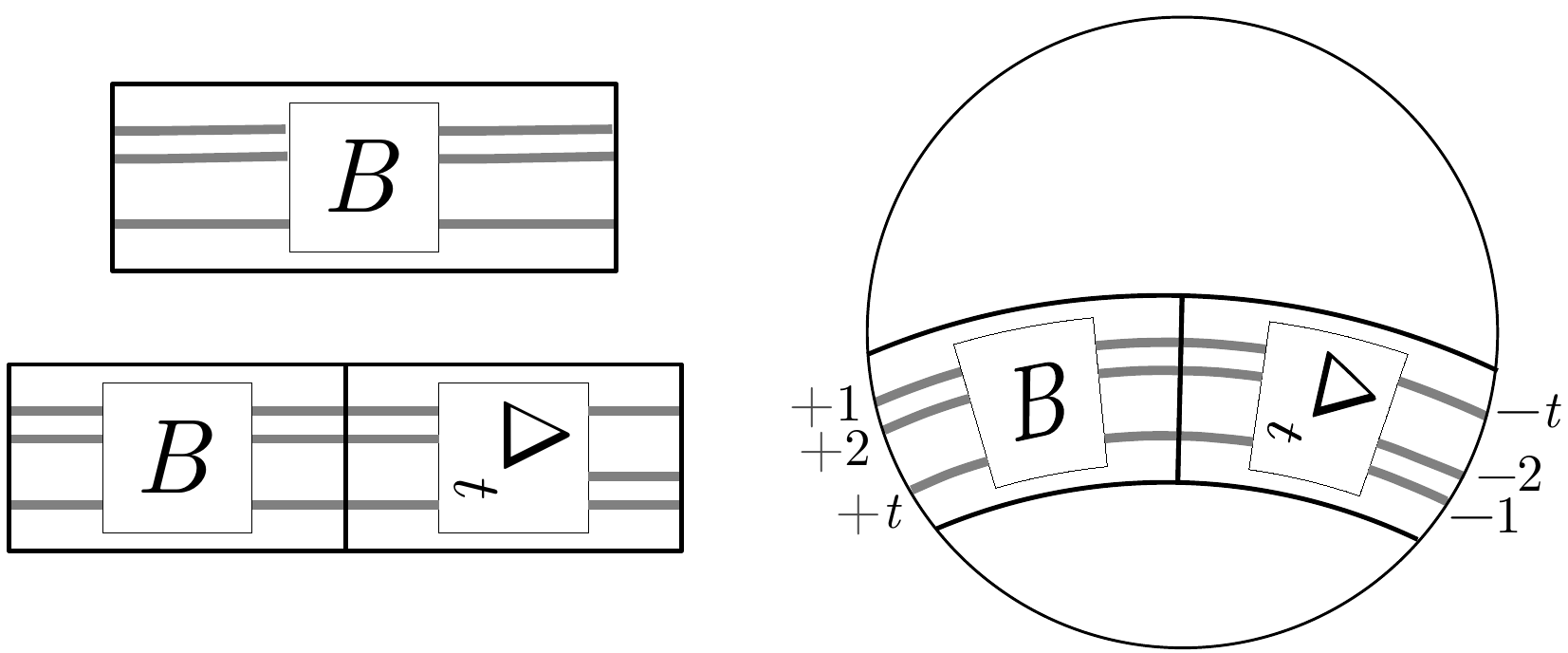}
\caption[legenda elenco figure]{Transformation from a band diagram to a disk diagram.}\label{BL}
\end{figure}

\paragraph{Grid diagrams}
\label{grid}
Another type of representation for links in lens spaces relying on genus one Heegaard splittings is the \textit{grid diagram} one developed in   \cite{BG}.  Using general position theory and PL-approximation, we can suppose that  a link in $L(p,q)$ intersects transversely the Heegaard torus in $2n$ points  and that the projection of the link over the Heegaard torus is ``parallel'' with respect to  a coordinate system of the torus consisting of  the boundaries of a fixed meridian disk in each solid tori. As a consequence, referring to Figure \ref{grid},  each link in $L(p,q)$ can be represented by  marking the intersections of the link with the Heegaard torus represented as  a rectangular grid  diagram  obtained by juxtaposing $p$ boxes $n\times n$.  Clearly the vertical sides of the rectangle as well as  the  horizontal ones are identified  together   according to the arrows and  the upper horizontal side is glued with the lower one after performing  a left shift of $q$ boxes, as suggested by the indices;  the horizontal curves (resp. the vertical ones) divides the torus into $n$ horizontal (resp. vertical) annuli called  \textit{rows} (resp. \textit{columns}). In each row (resp. column) there is one and only one   $X$ and one and only one $O$: according to a fixed orientation of the link and   of the  normal bundle of the torus, the difference between the two types of marking depends whether,    the tangent vector to the link in the intersection point  is coherent or not to the normal bundle of the torus. The  number $n$ is called \textit{grid number} of the diagram.

 \begin{figure}[htb!]                      
\begin{center}                         
\includegraphics[width=13cm]{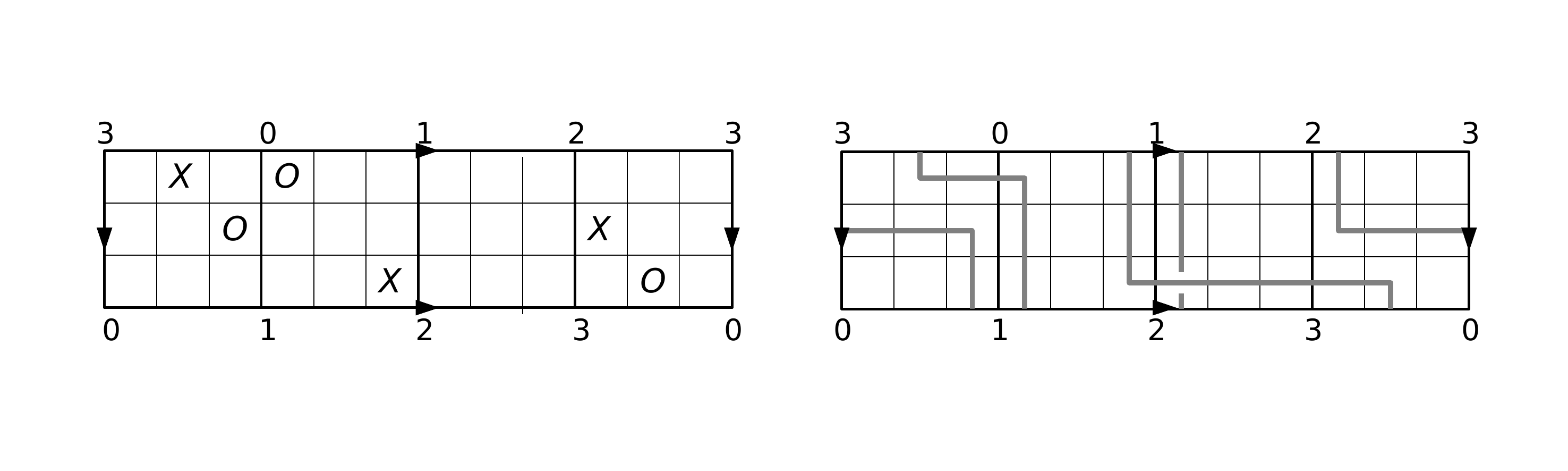}
\caption[legenda elenco figure]{A grid diagram with grid number $3$ in $L(4,1)$ and the reconstruction of the represented link.}\label{grid}
\end{center}
\end{figure}

\paragraph{Generalized Reidemeister moves for grid diagrams}
For grid diagram  the isotopy equivalence is established (see \cite{BG}) via  two kinds of moves: \textit{(de)stabilization} depicted  in Figure \ref{stab},  increasing or decreasing the grid number by one,  and   \textit{non-interleaving commutation} depicted   in Figure  \ref{comm} and exchanging two adjacent columns or rows. The adjective non-interleaving  refers to the following fact:  the annulus  $A$ 
consisting of  the two exchanged  columns (or rows)    is divided into $pn$ bands  by the rows (columns); if  $s_1$ and $s_2$ are the two bands  containing the markings of one of the exchanged column (row),  the markings of the other column (row) must lie in a  different component of $A-s_1-s_2$.

\begin{figure}[h!]                      
\begin{center}                         
\includegraphics[width=12cm]{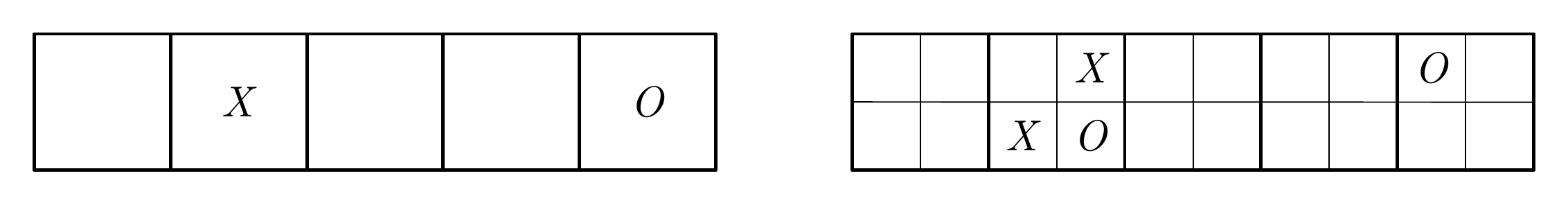}
\caption[legenda elenco figure]{An example of (de)stabilization.}\label{stab}
\end{center}
\end{figure}

\begin{figure}[h!]                      
\begin{center}                         
\includegraphics[width=12cm]{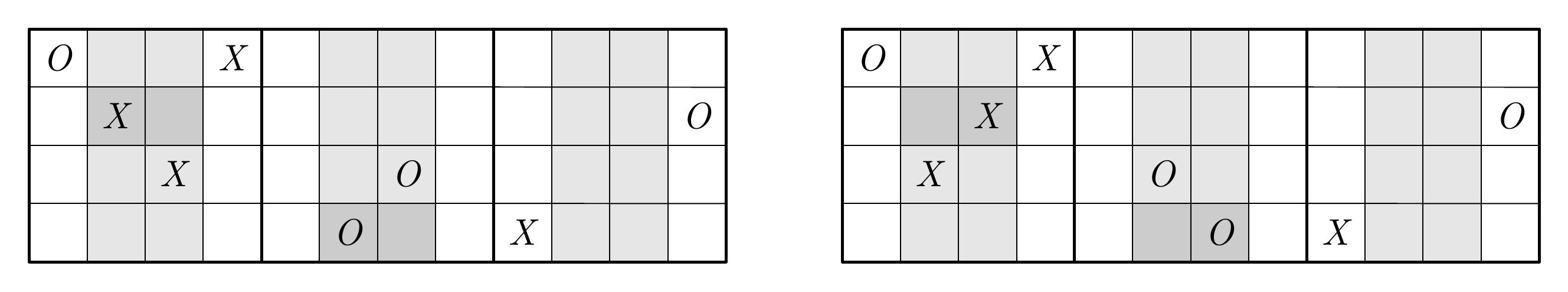}
\caption[legenda elenco figure]{An example of non-interleaving commutation.}\label{comm}
\end{center}
\end{figure}

\paragraph{Transformation between disk and grid diagram} 
The transition between grid and disk diagram is sketched in Figures \ref{GL} and \ref{LG} and  described in \cite{CMR}.
 
\begin{figure}[h!]                      
\begin{center}                         
\includegraphics[width=10cm]{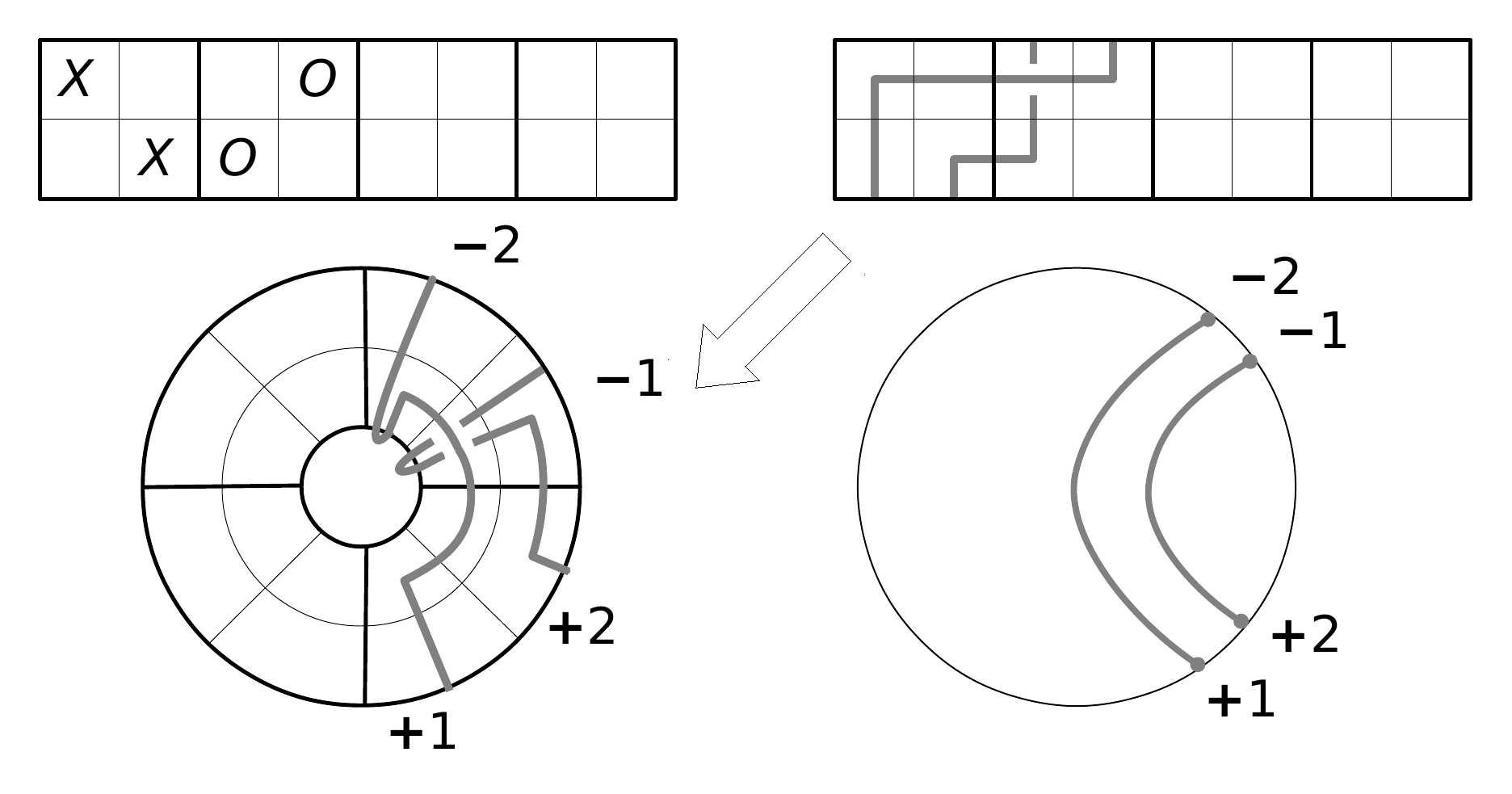}
\caption[legenda elenco figure]{From a grid diagram to a disk diagram  in $L(4,1)$.}\label{GL}
\end{center}
\end{figure}

\begin{figure}[h!]                      
\begin{center}                         
\includegraphics[width=10cm]{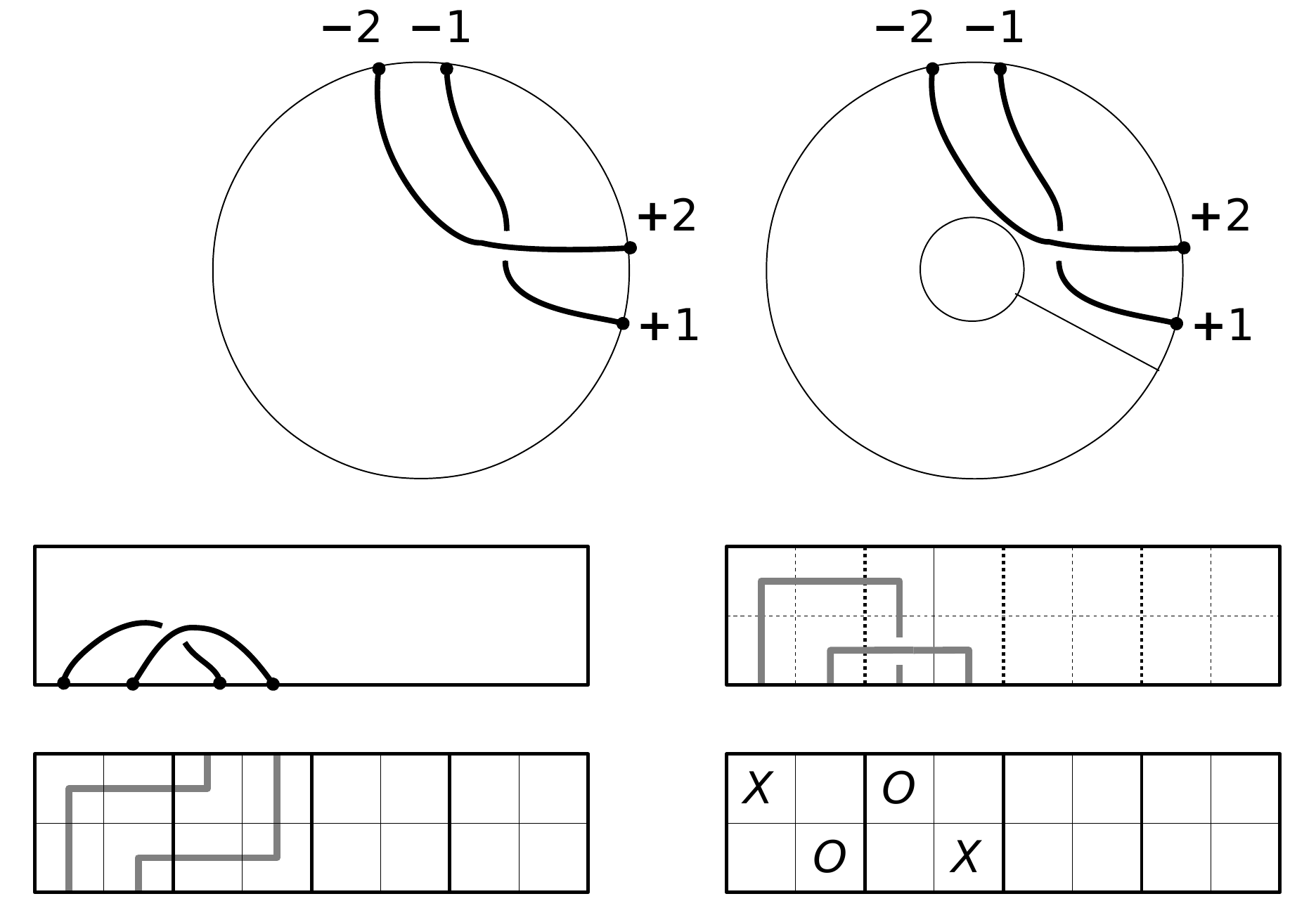}
\caption[legenda elenco figure]{From a disk diagram  to a grid diagram in $L(4,1)$.}\label{LG}
\end{center}
\end{figure}

\paragraph{The homology class of knots}
The \textit{homology class} of an oriented knot $K \subset L(p,q)$ is $[K] \subset H_{1}(L(p,q)) \cong \mathbb{Z}_{p}$. If $K$ is represented via disk diagram the homology class $[K]$ is the algebraic intersection number,  obviously mod $p$,  between the knot $K$ and the oriented boundary of the disk diagram, while if it is represented via  band  (resp. grid diagram), it is the algebraic intersection number between $K$ and one of the vertical (resp.  horizontal) oriented sides  of the rectangle. 
Notice that the isotopy moves cannot change the homology class of the knots.


\section{Diffeo-equivalence}

\paragraph{Diffeo-equivalence definition}

A weaker equivalence relation for the set of links in a lens space is the one by diffeomorphism of pairs.   
Two links $L$ and $L'$ in $M$ are \emph{diffeo equivalent} if there exists a diffeomorphism of pairs $h: (M,L) \rightarrow(M,L')$, that is to say a diffeomorphism $h:M \rightarrow M$ such that $h(L)=L'$. It is not necessary that the diffeomorphism is orientation preserving.

\begin{oss}
Two isotopy equivalent links $L$ and $L'$ in $M$ are necessarily diffeo equivalent, since from the ambient isotopy $H\colon M \times [0,1] \rightarrow M$, the map $h_{1}\colon (M,L) \rightarrow (M,L')$ is a diffeomorphism of pairs.
\end{oss}

The two definitions are equal for links in $\s3$ up to an orientation reversing diffeomorphism. For the lens spaces, this is not true, as we will see in the following.

\paragraph{The Diffeotopy group of lens spaces}

In order to describe the group of the diffeomorphism of the lens space up to isotopy, we have to consider the three following elements, which preserve the Heegaard torus $\partial V_{1}=\varphi_{p,q}(\partial V_{2})$.
Let $\tau$ be the diffeomorphism of $L(p,q)$, preserving $V_{1}$ and $V_{2}$, such that it is an involution and can be expressed into coordinates by $\tau(u,v)=(\bar{u},\bar{v})$ for each solid torus. Let $\sigma_{+}$ be the diffeomorphism exchanging $V_{1}$ and $V_{2}$, parametrized by $\sigma_{+}: (u,v) \in V_{1} \longleftrightarrow (u,v) \in V_{2}$. It is also an involution and it commutes with $\tau$.
At last, the diffeomorphism $\sigma_{-}$ exchanges $V_{1}$ and $V_{2}$ by $\sigma_{-}: (u,v) \in V_{1} \longleftrightarrow (\bar{u},v) \in V_{2}$. It is of order $4$ and $\sigma_{-}^{2}=\tau$. 

\begin{teo}[\cite{Bo}, \cite{HR}]\label{diffeotopy}
The group of diffeomorphism of the lens space $L(p,q)$, up to isotopy, is isomorphic to:
\begin{itemize}\itemsep-3pt
\item $\z2$ if $p=2$, it is generated by $\sigma_{-}$; 
\item $\z2$ if  $q \equiv \pm 1 \mod p$ and $p \neq 2$, it is generated by $\tau$;
\item $\z2 \oplus \z2$ if  $q^2 \equiv + 1 \mod p$ and $q \not \equiv \pm 1 \mod p$, it is generated by $\tau$ and $\sigma_{+}$;
\item $\z4$ if $q^2 \equiv - 1 \mod p$ and $p \neq 2$, it is generated by $\sigma_{-}$;
\item $\z2$ if $q^{2} \not \equiv \pm 1 \mod p$, it is generated by $\tau$.
\end{itemize}

\end{teo}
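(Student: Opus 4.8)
The plan is to establish the theorem by identifying the group $\mathrm{Diff}(L(p,q))$ up to isotopy — call it $\mathcal{D}(p,q)$ — with the mapping class group of the Heegaard torus that extends over both handlebodies, and then read off the answer from the arithmetic of $(p,q)$. First I would recall that, by the work on diffeomorphisms of lens spaces (Bonahon \cite{Bo}, Hodgson--Rubinstein \cite{HR}), every diffeomorphism of $L(p,q)$ is isotopic to one preserving the genus-one Heegaard splitting $V_1\cup_{\varphi_{p,q}}V_2$, and moreover preserving it up to possibly swapping $V_1$ and $V_2$. This reduces the computation of $\mathcal{D}(p,q)$ to understanding which elements of $\mathrm{MCG}(\partial V_1)\cong GL(2,\mathbb{Z})$ (acting on $H_1$ of the torus with its meridian-longitude basis) are realized by self-diffeomorphisms of the splitting. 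A diffeomorphism preserving $V_1$ must carry its meridian to $\pm$(meridian); a diffeomorphism swapping the tori must carry the meridian of $V_1$ to $\pm$(meridian of $V_2$), and the gluing map $\varphi_{p,q}$ then pins down the allowed matrices modulo $p$.

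The core computation is the second step: writing down the matrix condition. Using the standard basis, $\varphi_{p,q}$ is represented (up to the usual conventions) by a matrix congruent to $\left(\begin{smallmatrix} q & * \\ p & * \end{smallmatrix}\right)$, and a candidate extendable diffeomorphism corresponds to a pair of matrices $A_1, A_2 \in GL(2,\mathbb{Z})$ — one for each handlebody — compatible via $A_1 \varphi_{p,q} = \varphi_{p,q} A_2$ (or with the roles swapped when the tori are exchanged). The handlebody condition forces $A_i$ to be upper triangular with $\pm 1$ on the diagonal, and working the compatibility out modulo $p$ produces exactly the trichotomy between the cases $q\equiv\pm1$, $q^2\equiv 1$ but $q\not\equiv\pm1$, $q^2\equiv-1$, and $q^2\not\equiv\pm1$. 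I would then check that the three explicit diffeomorphisms $\tau$, $\sigma_+$, $\sigma_-$ defined above realize the generators in each regime: $\tau$ is $(u,v)\mapsto(\bar u,\bar v)$ on each solid torus, hence corresponds to $-\mathrm{Id}$ on $H_1$ of the torus and always extends; $\sigma_+$ swaps the tori identically and extends precisely when the gluing is symmetric under the swap, i.e. when $q^2\equiv 1$; and $\sigma_-$ swaps the tori with a conjugation on one factor, giving an order-$4$ element (with $\sigma_-^2=\tau$) that extends exactly when $q^2\equiv-1$, and which in the exceptional case $p=2$ already generates everything since then $q\equiv\pm1$ automatically. Finally I would verify these elements are non-trivial in $\mathcal{D}(p,q)$ and satisfy no further relations, by evaluating their action on $H_1$ of the torus (or on a suitable isotopy invariant such as the action on $\pi_1$ or on the set of isotopy classes of the core curves), which distinguishes $\tau$, $\sigma_+$, $\tau\sigma_+$ and the powers of $\sigma_-$.

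The main obstacle I anticipate is not the linear algebra but the reduction itself: proving that \emph{every} isotopy class of diffeomorphism is represented by one respecting the Heegaard splitting, and that isotopic splitting-preserving diffeomorphisms are connected through splitting-preserving isotopies, so that the torus-level computation faithfully captures $\mathcal{D}(p,q)$ without overcounting or undercounting. This is precisely where one must invoke the structural input of \cite{Bo} and \cite{HR} (uniqueness of the genus-one Heegaard splitting up to isotopy, together with the Smale conjecture / Hatcher-type results controlling $\mathrm{Diff}$ of the pieces), rather than reprove it. Given that, the remaining work is the bookkeeping of matrices modulo $p$ and the case split on the congruence class of $q^2$, which is routine; the borderline subtlety is the case $p=2$, where $\tau$ becomes isotopic to the identity and the group collapses to the $\mathbb{Z}_2$ generated by $\sigma_-$, so I would treat that case separately at the end.
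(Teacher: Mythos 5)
The paper does not prove Theorem~\ref{diffeotopy} at all: it is stated as a quoted result of Bonahon \cite{Bo} and Hodgson--Rubinstein \cite{HR}, so there is no internal proof to compare against. Your outline is a correct account of how those references establish it --- reduction to the genus-one Heegaard splitting, the triangular-matrix/congruence bookkeeping giving the case split on $q^2 \bmod p$, realization of the generators by $\tau$, $\sigma_+$, $\sigma_-$, and detection of nontriviality via the action on $\pi_1$ and orientation --- and you correctly locate the only genuinely hard step (that every diffeomorphism is isotopic to a splitting-preserving one and that the splitting-level computation is faithful) as the structural input that must be imported from \cite{Bo} and \cite{HR} rather than reproved.
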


\paragraph{Moves for diffeomorphisms of lens spaces for disk diagrams}

\begin{teo}\label{dm}
Let $D_{L}$ be a standard disk diagram of the link $L$ in the lens space $L(p,q)$.
Then the effects of $\tau$, $\sigma_{+}$ and $\sigma_{-}$ on $D_{L}$ are the ones described, respectively, in Figures \ref{t}, \ref{s+} and \ref{s-}. 
The integer $h$, necessary for the description of the moves corresponding to $\sigma_{+}$ and $\sigma_{-}$, is defined  as follows:
\begin{itemize}\itemsep-3pt
\item  if $q^2 \equiv + 1 \mod p$ and $q \not \equiv \pm 1 \mod p$, then $h=(q^2 -1)/p$;
\item  if $q^2 \equiv - 1 \mod p$, then $h=(q^2 +1)/p$;
\end{itemize}
where $L$  denotes a $t$-tangle,  $L$ mirrored  denotes the mirror of the tangle $L$  and  $s(L)$  denotes the  tangle  $L$ with all the crossings exchanged.

\begin{figure}[h!]      
\center                
\includegraphics[width=10cm]{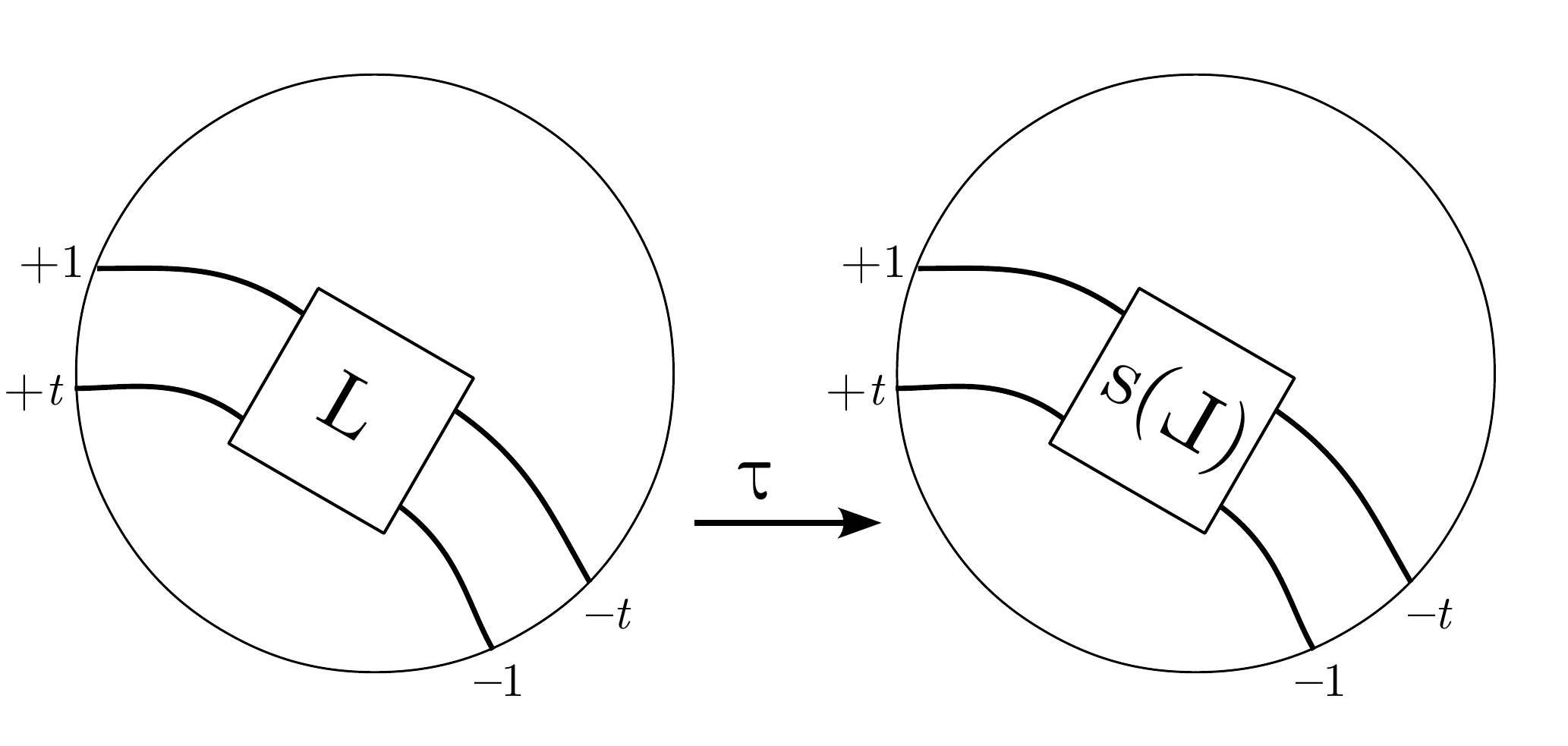}
\caption[legenda elenco figure]{Effect of $\tau$ on a standard disk diagram in $L(p,q)$.}\label{t}
\end{figure}
\begin{figure}[h!]  
\center                   
\hspace{-5mm}
\includegraphics[width=13cm]{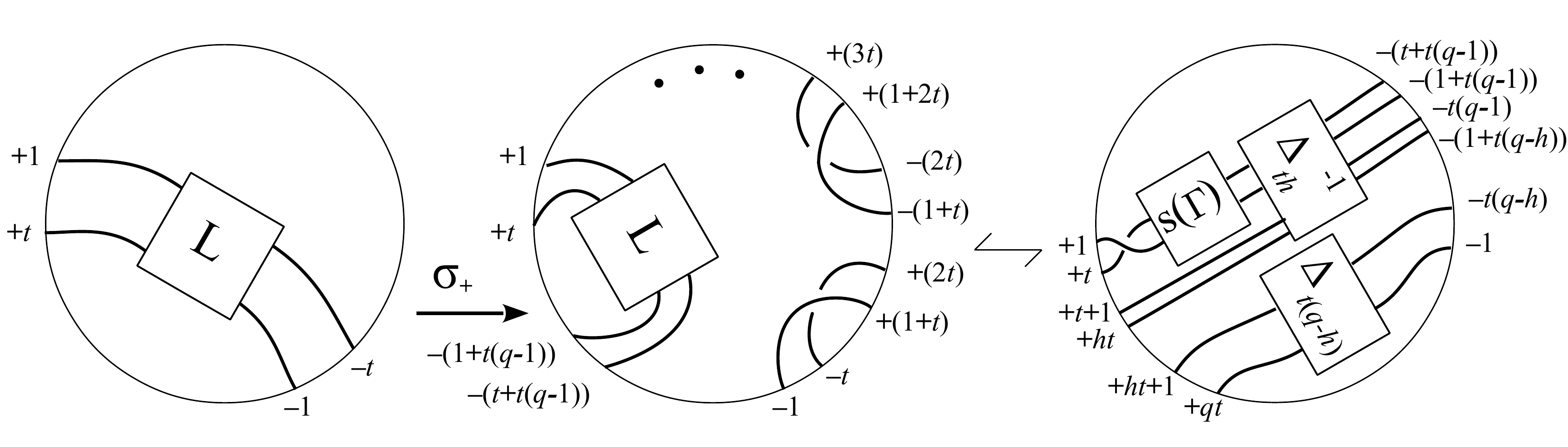}
\caption[legenda elenco figure]{Effect of $\sigma_{+}$ on a standard disk diagram in $L(p,q)$.}\label{s+}
\end{figure}
\begin{figure}[h!]          
\center            
\hspace{-5mm}
\includegraphics[width=13cm]{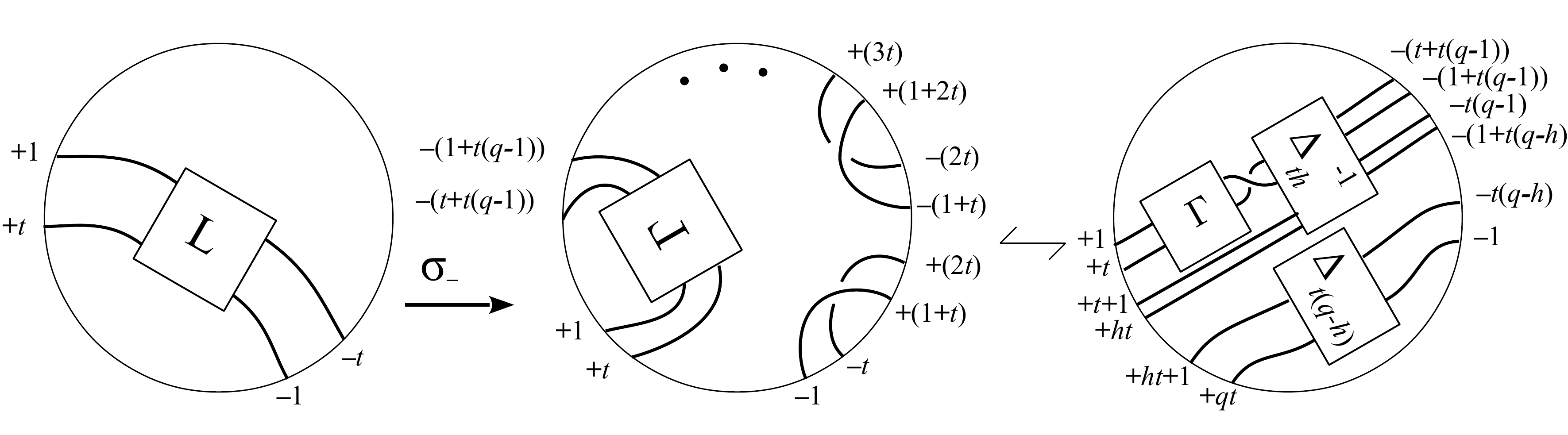}
\caption[legenda elenco figure]{Effect of $\sigma_{-}$ on a standard disk diagram in $L(p,q)$.}\label{s-}
\end{figure}
\end{teo}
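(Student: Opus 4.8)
\emph{Proof strategy.} The plan is to realise each of the three generators $\tau$, $\sigma_{+}$, $\sigma_{-}$ as an explicit self-diffeomorphism of $L(p,q)$, to apply it to a representative of $F^{-1}(L)\subset B^{3}$ that is in general position with respect to the equatorial disk $B^{2}_{0}$, and then to reproject onto $B^{2}_{0}$ and bring the resulting diagram into standard form using the generalized Reidemeister moves $R_{1},\dots,R_{7}$. What makes the three computations feasible is the explicit dictionary, worked out in Figure~\ref{QtoHS}, between the lens model $B^{3}/{\sim}$, on which disk diagrams are read, and the genus one Heegaard splitting $V_{1}\cup_{\varphi_{p,q}}V_{2}$, in whose solid--torus coordinates $(u,v)$ the generators are defined: under this dictionary one solid torus is a tubular neighbourhood of the axis $NS$, the other is the complementary solid torus whose meridian disk is carried onto $B^{2}_{0}$, and the pairing $+i\sim -i$ of the boundary points encodes, through the gluing $f_{3}\circ g_{p,q}$, the way an arc leaving $E_{+}$ re-enters through $E_{-}$; this is exactly why the rotation $g_{p,q}$ forces each of the moves to depend on $q$.

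\emph{The case of $\tau$.} Since $\tau(u,v)=(\bar u,\bar v)$ preserves each $V_{i}$ and restricts on the Heegaard torus to the elliptic involution (which is well defined independently of the two coordinate systems, since on that torus $\tau$ is induced by $-\mathrm{id}$ in homology, which commutes with the gluing matrix), it is an orientation preserving involution, and carrying it through the dictionary of Figure~\ref{QtoHS} one finds that it acts on $B^{2}_{0}$, up to an isotopy, as a planar reflection of the tangle together with the $q$--dependent permutation of the $2t$ boundary points induced by $g_{p,q}$. Thus the projection of $\tau\!\left(F^{-1}(L)\right)$ is the mirror of the tangle $L$ sitting inside a re-labelled boundary; applying $R_{1},\dots,R_{7}$ to restore the order $(+1,\dots,+t,-1,\dots,-t)$ on $\partial B^{2}_{0}$ yields exactly the move of Figure~\ref{t}. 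One checks along the way that the move sends standard diagrams to standard diagrams and that it squares to the identity modulo $R_{1},\dots,R_{7}$, in accordance with $\tau^{2}=\mathrm{id}$.

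\emph{The cases of $\sigma_{+}$ and $\sigma_{-}$.} Here the diffeomorphism exchanges $V_{1}$ and $V_{2}$, so a link that has been isotoped into the solid torus whose meridian disk is $B^{2}_{0}$ must, after the exchange, be re-described as a link lying in the axis--neighbourhood and then reprojected; equivalently, one passes to a band diagram, where the exchange is transparent, and comes back via Figures~\ref{LB} and \ref{BL}. The key point is that the identity gluing of coordinates used by $\sigma_{+}$ is compatible with $\varphi_{p,q}$ precisely when $q^{2}\equiv 1\pmod p$, and that then the meridian of the swapped torus, read on the boundary of the axis--neighbourhood, differs from the original framing by exactly $h=(q^{2}-1)/p$ full twists; these $h$ twists account for the $h$--parametrised part of Figure~\ref{s+}, the remaining part being the ``turning inside out'' of the meridian disk together with the $q$--governed redistribution of the boundary points. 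The map $\sigma_{-}$ is treated in the same way: the extra conjugation in the first coordinate forces the compatibility condition $q^{2}\equiv -1\pmod p$, hence $h=(q^{2}+1)/p$, and composes the previous picture with the planar mirroring already found for $\tau$---consistently with $\sigma_{-}^{2}=\tau$---producing Figure~\ref{s-}.

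\emph{Well-definedness and the main difficulty.} Finally one verifies that none of the three moves depends on the auxiliary choices (the isotopy pushing $L$ into a solid torus, the chosen regular projection): two such choices differ by an ambient isotopy of $L(p,q)$, hence on diagrams by a finite sequence of generalized Reidemeister moves, so the moves are well defined on disk diagrams modulo $R_{1},\dots,R_{7}$, and together with Theorem~\ref{diffeotopy} they realise the full combinatorial diffeo equivalence. The main obstacle is the analysis of $\sigma_{\pm}$: whereas $\tau$ is essentially a symmetry already visible in the lens model, for $\sigma_{\pm}$ one must carry the exchange of the solid tori through the asymmetric dictionary of Figure~\ref{QtoHS} while keeping simultaneous track of the $h$ twists produced by the regluing and of the permutation of the $2t$ boundary points produced by $g_{p,q}$, and then check that the net effect on the standardised diagram is exactly the local picture drawn; it is there that the explicit value of $h$ and the case distinction of Theorem~\ref{diffeotopy} are genuinely used.
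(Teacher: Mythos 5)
Your overall strategy is the same as the paper's: realise $\tau$, $\sigma_{+}$, $\sigma_{-}$ as explicit diffeomorphisms, transport the link through the dictionary between the lens model and the genus one Heegaard splitting (Figure~\ref{QtoHS}), reproject, and restandardise with $R_{6}$ moves; for $\sigma_{\pm}$ the paper does exactly what you describe (see Figure~\ref{ps-} for $L(5,2)$), and your explanation of where $h$ comes from --- the regluing of coordinates being compatible with $\varphi_{p,q}$ only when $q^{2}\equiv\pm1\pmod p$ and contributing $(q^{2}\mp1)/p$ full twists --- is the right mechanism. The one organisational difference is that for $\tau$ the paper does not argue in the lens model at all: it reads off the effect on a band diagram, where $\tau(u,v)=(\bar u,\bar v)$ is transparent, and then converts back to a disk diagram via the transformations of Figures~\ref{LB} and~\ref{BL}.

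There is, however, one concrete inaccuracy in your description of the $\tau$ move. You state that the projection of $\tau\!\left(F^{-1}(L)\right)$ is ``the mirror of the tangle $L$'', i.e.\ a planar reflection. But $\tau(u,v)=(\bar u,\bar v)$ is the composition of two reflections, hence orientation preserving, so its effect on a diagram cannot be a single planar mirror: that is the local signature of an orientation-reversing map, and for $q^{2}\not\equiv-1\pmod p$ the lens space admits no orientation-reversing self-diffeomorphism at all. The correct effect, as in the paper's proof, is the mirror of the tangle \emph{together with the exchange of all crossings} --- this is precisely why the statement introduces the operation $s(\cdot)$ separately from ``mirrored''. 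A related slip occurs for $\sigma_{-}$: the factor by which $\sigma_{-}$ differs from $\sigma_{+}$ is the single reflection $(u,v)\mapsto(\bar u,v)$, not $\tau$ itself, so ``composing with the planar mirroring already found for $\tau$'' conflates the one-coordinate and two-coordinate reflections, and the group relation $\sigma_{-}^{2}=\tau$ does not by itself justify that step. Neither issue changes the architecture of the argument, but since the content of the theorem is exactly the form of the moves, both need to be corrected.
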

\begin{proof}
For the $\tau$ move, it is easier to see the effect of $\tau$ on a band diagram, as displayed in Figure~\ref{pt}, and then recover the disk diagram as shown in Section~2.
Since the band diagram is a diagram in one of the two solid tori giving the lens space, the diffeomorphism $\tau(u,v)=(\bar{u},\bar{v})$ corresponds to a mirror on the band diagram plus the exchange of all the crossings.
\begin{figure}[h!]      
\center                
\includegraphics[width=12cm]{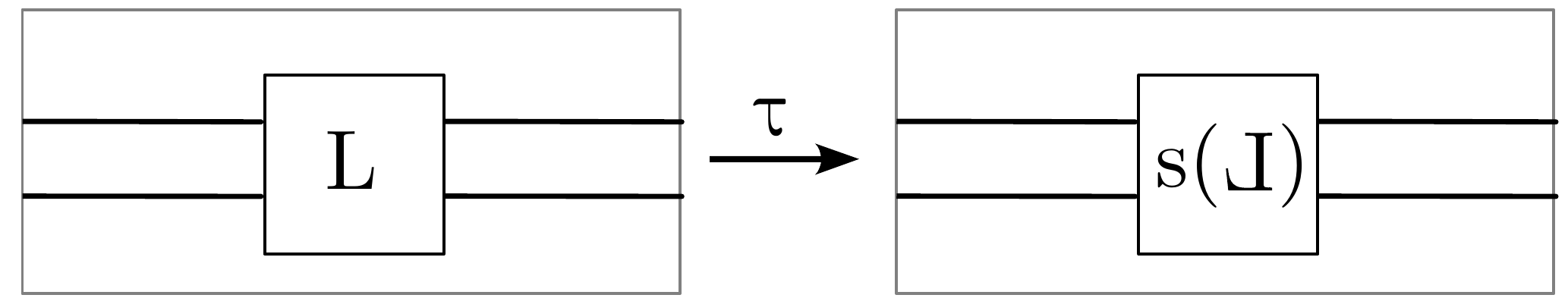}
\caption[legenda elenco figure]{Effect of $\tau$ on a band diagram.}\label{pt}
\end{figure}

For the other two moves it is necessary to use the equivalence between the ball model of the lens space and the genus one Heegaard splitting. First we deform a standard disk diagram into the Heegaard splitting model of the lens space, then we can apply $\sigma_{-}$ or $\sigma_{+}$, the non-isotopic diffeomorphisms of Theorem~\ref{diffeotopy}, and at last we recover a disk diagram of the link from the Heegaard splitting.
The example of Figure~\ref{ps-} describes what happens for $\sigma_{-}: (u,v) \in V_{1} \longleftrightarrow (\bar{u},v) \in V_{2}$, in the case of $L(5,2)$. For $\sigma_{+}$ the description is really similar, except that the diffeomorphism $\sigma_{+}: (u,v) \in V_{1} \longleftrightarrow (u,v) \in V_{2}$ does not mirror $L$. At last, in order to recover a standard disk diagram after the move, it is necessary to apply a sequence of $R_{6}$ moves.
\begin{figure}[h!]          
\center            
\includegraphics[width=12.2cm]{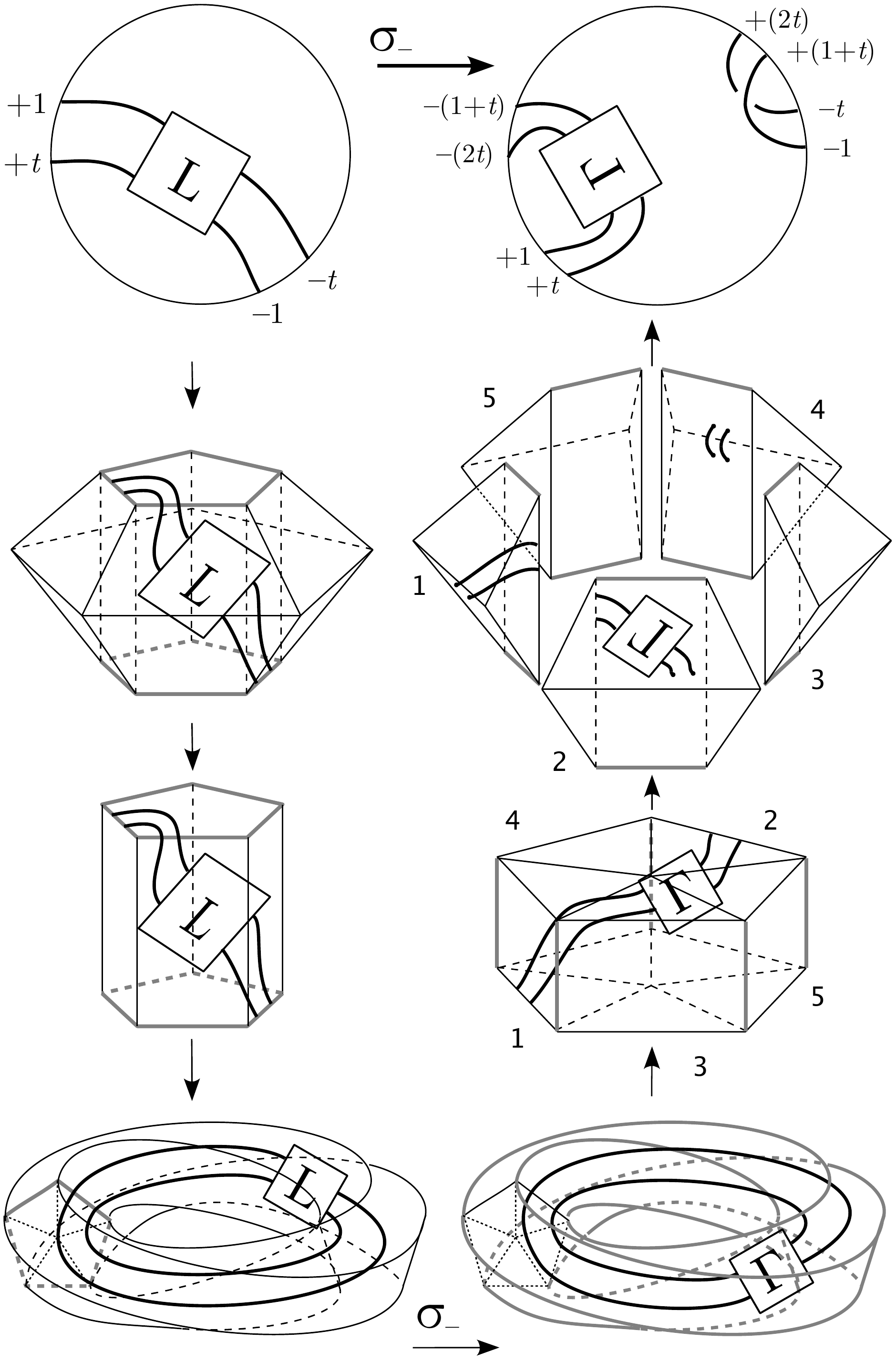}
\caption[legenda elenco figure]{Effect of $\sigma_{-}$ on $L(5,2)$.}\label{ps-}
\end{figure}
\end{proof}

In the case of the projective space, the only effective generator is $\sigma_{-}$ and its effect on disk diagram is the one depicted in Figure \ref{s21-}. For those lens spaces with the element $\sigma_{-}^{3}=\sigma_{-} \circ \tau$ or $\sigma_{+} \circ \tau$ in their diffeotopy group, in order to obtain the effect on a disk diagram of such an element, it is more convenient to apply the $\tau$ move on a standard disk diagram, and then the $\sigma_{+}$ or $\sigma_{-}$ move on the just obtained standard disk diagram.

\begin{figure}[h!]          
\center            
\includegraphics[width=10cm]{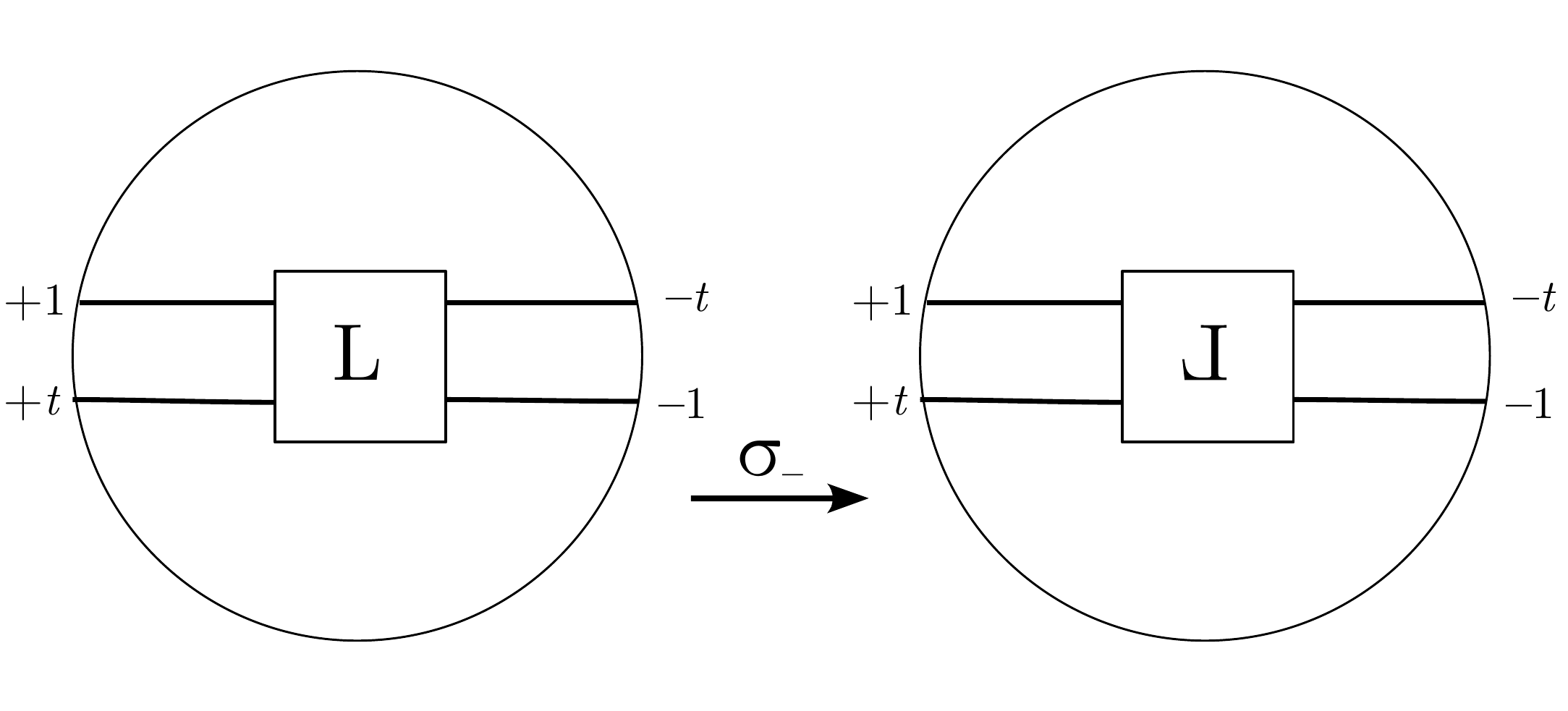}
\caption[legenda elenco figure]{Effect of $\sigma_{-}$ on a disk diagram $\rp3 =L(2,1)$.}\label{s21-}
\end{figure}

\begin{cor}
Two disk diagrams $D$ and $D'$ represent diffeo equivalent links in $L(p,q)$ if and only if they can be connected by a finite sequence of Reidemeister-type moves $R_{1}, \ldots, R_{7}$ and diffeo moves depicted in Figures \ref{t}, \ref{s+} and \ref{s-}.
\end{cor}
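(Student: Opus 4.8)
The plan is to derive the corollary as a direct combination of the Reidemeister-type theorem for disk diagrams from \cite{CMM}, the classification of the diffeotopy group in Theorem~\ref{diffeotopy}, and the computation of the generators' effect on standard disk diagrams in Theorem~\ref{dm}. First I would establish the ``if'' direction: the moves $R_1,\dots,R_7$ preserve the isotopy class of the represented link, hence a fortiori the diffeo class; and each of the moves in Figures~\ref{t}, \ref{s+}, \ref{s-} realizes, by Theorem~\ref{dm}, the action of $\tau$, $\sigma_+$, or $\sigma_-$ respectively, which are diffeomorphisms of $L(p,q)$, so applying them sends a link to a diffeo equivalent one. Therefore any finite sequence of such moves connects only diffeo equivalent diagrams.

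For the ``only if'' direction, suppose $D$ and $D'$ represent diffeo equivalent links $L$ and $L'$, so there is a diffeomorphism $h\colon (L(p,q),L)\to(L(p,q),L')$. Using the generalized Reidemeister moves I may first reduce both $D$ and $D'$ to standard disk diagrams (as recalled in the paragraph on standard diagrams, following \cite{M1}); this only costs a sequence of $R_1,\dots,R_7$ moves, so it suffices to treat the standard case. The isotopy class of $h$ in the diffeotopy group of $L(p,q)$ is, by Theorem~\ref{diffeotopy}, one of the finitely many elements generated by $\tau$, $\sigma_+$, $\sigma_-$ (depending on whether $p=2$, $q\equiv\pm1$, $q^2\equiv\pm1$, or none of these). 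Write $h$ up to isotopy as a word $w$ in these generators. I would then argue that $h$ is isotopic to the composition of diffeomorphisms realizing each letter of $w$ in turn: apply the corresponding move (Figure~\ref{t}, \ref{s+}, or \ref{s-}) to $D$, after each application restoring a standard diagram by a sequence of $R_6$ moves exactly as in the proof of Theorem~\ref{dm}. The result is a standard disk diagram $D''$ of the link $w(L)$, which is isotopic to $h(L)=L'$; hence $D''$ and $D'$ represent isotopy equivalent links and are therefore connected by $R_1,\dots,R_7$ moves via \cite{CMM}. Concatenating all these sequences connects $D$ to $D'$ by the allowed moves.

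The main obstacle I anticipate is the bookkeeping in the ``only if'' direction: one must be careful that the moves of Theorem~\ref{dm} were stated only for \emph{standard} disk diagrams, so every intermediate diagram at which a diffeo move is applied has to be brought back to standard form first, and one should check that the composite of the diagram-level operations indeed realizes the composite diffeomorphism (not, say, a diffeomorphism that differs by an isotopy whose effect on the diagram is nontrivial — but isotopies are exactly the $R_i$ moves by \cite{CMM}, so this is harmless). A secondary point is that one should note the case $p=2$ separately, since there $\tau$ and $\sigma_+$ are isotopically trivial and $\sigma_-$ is the only generator, with its effect given by Figure~\ref{s21-}; and the cases containing $\sigma_-^3=\sigma_-\circ\tau$ or $\sigma_+\circ\tau$ are handled, as remarked after Theorem~\ref{dm}, by composing a $\tau$ move with a $\sigma_\pm$ move. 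Once these cases are recorded, the proof is a short assembly of the cited results.

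\begin{proof}
One direction is immediate: the moves $R_1,\dots,R_7$ preserve the isotopy class of the represented link by \cite{CMM}, hence its diffeo class, and by Theorem~\ref{dm} each move of Figures~\ref{t}, \ref{s+}, \ref{s-} transforms a standard disk diagram of $L$ into a standard disk diagram of $\tau(L)$, $\sigma_+(L)$, $\sigma_-(L)$ respectively, which are diffeo equivalent to $L$. Thus any finite sequence of such moves connects diagrams of diffeo equivalent links.

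Conversely, let $D,D'$ represent diffeo equivalent links $L,L'$ in $L(p,q)$, via a diffeomorphism $h\colon(L(p,q),L)\to(L(p,q),L')$. Using $R_1,\dots,R_7$ we may assume $D$ and $D'$ are standard. By Theorem~\ref{diffeotopy} the class of $h$ in the diffeotopy group is represented by a word $w$ in $\tau,\sigma_+,\sigma_-$ (when $p=2$ only $\sigma_-$ occurs, with effect as in Figure~\ref{s21-}; the elements $\sigma_-\circ\tau$ and $\sigma_+\circ\tau$ are treated as a $\tau$ move followed by a $\sigma_-$ or $\sigma_+$ move, as noted after Theorem~\ref{dm}). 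Starting from $D$, apply the diffeo move of Figure~\ref{t}, \ref{s+} or \ref{s-} corresponding to each letter of $w$, restoring after each step a standard disk diagram by a sequence of $R_6$ moves as in the proof of Theorem~\ref{dm}. The resulting standard disk diagram $D''$ represents $w(L)$, which is isotopic to $h(L)=L'$. Hence $D''$ and $D'$ represent isotopy equivalent links, so by \cite{CMM} they are connected by a finite sequence of moves $R_1,\dots,R_7$. Concatenating the sequences that carry $D$ to $D''$ and $D''$ to $D'$ yields the desired finite sequence of Reidemeister-type and diffeo moves connecting $D$ to $D'$.
\end{proof}
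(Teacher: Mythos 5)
Your argument is correct and is exactly the (implicit) reasoning the paper intends: the corollary is stated without proof as an immediate consequence of the Reidemeister-type theorem of \cite{CMM}, the diffeotopy group classification in Theorem~\ref{diffeotopy}, and the realization of the generators $\tau$, $\sigma_+$, $\sigma_-$ by the diagram moves of Theorem~\ref{dm}. Your care about restoring standard diagrams between diffeo moves and about the special cases ($p=2$, and the composites $\sigma_-\circ\tau$, $\sigma_+\circ\tau$) correctly fills in the bookkeeping the paper leaves to the reader.
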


\paragraph{Effect of the diffeomorphism moves on the homology class of a knot}

As a consequence of Theorem \ref{dm}, we can easily recover how the diffeomorphisms, and so the diffeo moves change  the homology class of a knot.

\begin{prop}
The effect of the diffeomorphisms on the homology class of a knot $K \subset L(p,q)$ is the following:
\begin{itemize}\itemsep-3pt
\item $[\tau(K)]=-[K]$;
\item $[\sigma_{+}(K)]=q[K]$;
\item $[\sigma_{-}(K)]=q[K]$.
\end{itemize}
\end{prop}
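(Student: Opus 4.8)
The plan is to read off each of the three formulas directly from the explicit parametrizations of $\tau$, $\sigma_{+}$, $\sigma_{-}$ given before Theorem~\ref{diffeotopy}, using the fact that on a disk, band or grid diagram the homology class $[K]$ is computed as an algebraic intersection number with a suitable curve, together with the descriptions of the diffeo moves in Theorem~\ref{dm}.

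For $\tau$ I would argue as follows. The diffeomorphism $\tau(u,v)=(\bar u,\bar v)$ preserves each solid torus $V_i$; restricted to a single solid torus it is just complex conjugation in each factor, hence it sends the core curve $\alpha=\s1\times\{\ast\}$ to itself with reversed orientation and likewise reverses the meridian $\beta$. Since $H_1(L(p,q))\cong\z p$ is generated by the image of $\alpha$ and $[K]$ is the winding number of $K$ around that core (equivalently the algebraic intersection of $K$ with a meridian disk), an orientation-reversing map on the core multiplies this count by $-1$. Concretely, in the band diagram picture of Figure~\ref{pt} the $\tau$-move is a mirror reflection of the band plus crossing exchange; mirroring reverses the sign of every intersection of $K$ with the vertical side of the band, so $[\tau(K)]=-[K]$. (One should double-check that crossing exchange does not affect the homology class, which is immediate since it does not change the underlying curve on the Heegaard torus.)

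For $\sigma_{+}$ and $\sigma_{-}$ the key point is that both exchange $V_1$ and $V_2$, and under the gluing $\varphi_{p,q}$ the meridian $\beta_2=\{\ast\}\times\partial B^2$ of $V_2$ is identified with $q\beta_1+p\alpha_1$ on $\partial V_1$. Thus a curve that bounds a meridian disk in $V_2$ wraps $q$ times (mod $p$) around the core of $V_1$. A knot lying in $V_1$ with homology class $m$ (its winding number around the core of $V_1$) is, after applying $\sigma_{\pm}$, carried into $V_2$ as a curve winding $m$ times around the core of $V_2$; re-expressing that class in terms of the fixed generator of $H_1(L(p,q))$ — i.e.\ pushing it back across the gluing into $V_1$ — multiplies it by $q$, giving $[\sigma_{\pm}(K)]=q[K]$. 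The sign ambiguity present in $\sigma_{-}$ (the extra conjugation $u\mapsto\bar u$ on one factor) only reverses the $\s1$-direction that does \emph{not} contribute to the winding number being measured, so it produces no sign change; this matches the fact, visible in Figures~\ref{s+} and \ref{s-}, that the two moves differ only by a mirror operation on the tangle and the attached value of $h$, neither of which alters the homology count. Alternatively, and perhaps more cleanly for the writeup, one invokes Theorem~\ref{dm} directly: the $\sigma_{\pm}$-move replaces the standard diagram by one whose boundary points are permuted by the ``multiplication by $q$'' pattern encoded by $h=(q^2\mp1)/p$, and tracing the algebraic intersection number of $K$ with the boundary of the new standard disk through that relabelling yields exactly a factor $q$.

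The only genuinely delicate step is bookkeeping the orientations and the normalization of the generator of $H_1$: one must fix once and for all which curve represents $1\in\z p$ and check that both $\sigma_{+}$ and $\sigma_{-}$ act as multiplication by $q$ rather than $q^{-1}$ (note $q\cdot q^{-1}\equiv 1$, and on the relevant part of the diffeotopy group $q^2\equiv\pm1$, so $q\equiv\pm q^{-1}$ and the two answers coincide up to sign anyway — this is consistent but should be remarked upon). I expect this sign/normalization check to be the main obstacle; everything else is a direct transcription of the definitions of $\tau,\sigma_{\pm}$ and of the moves in Theorem~\ref{dm}.
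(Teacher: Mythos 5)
Your proposal is essentially correct, but it follows a different (and more explicit) route than the paper, which actually offers no proof at all: the proposition is stated as an immediate ``consequence of Theorem~\ref{dm}'', i.e.\ one is meant to read the new algebraic intersection number with the diagram boundary directly off the transformed diagrams in Figures~\ref{t}, \ref{s+} and \ref{s-}, where all orientation conventions are already fixed by the pictures. Your direct geometric argument on the Heegaard splitting is a legitimate alternative and is clean for $\tau$ (conjugation reverses the core, hence $[K]\mapsto -[K]$). For $\sigma_{\pm}$, however, one step is imprecise: the relation $\varphi_{p,q}(\beta_2)=q\beta_1+p\alpha_1$ does not by itself say that a meridian of $V_2$ ``wraps $q$ times around the core of $V_1$'' in the homological sense you need; what the gluing actually gives, via $\varphi_{p,q}(\alpha_2)=a\beta_1+b\alpha_1$ with $qb-pa=\pm1$, is that the core of $V_2$ represents $\pm q^{-1}$ times the core of $V_1$ in $H_1(L(p,q))$, so the naive computation yields $[\sigma_{\pm}(K)]=\pm q^{-1}[K]$. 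This agrees with the claimed $q[K]$ only because $\sigma_{+}$ (resp.\ $\sigma_{-}$) exists precisely when $q^2\equiv +1$ (resp.\ $q^2\equiv-1$) mod $p$ — a point you do flag at the end, together with the sign bookkeeping. So the argument closes, but the factor $q$ versus $q^{-1}$ should be derived from the class of the core $\alpha_2$, not from the meridian relation; the paper's diagrammatic route sidesteps this entirely because the value of $h=(q^2\mp1)/p$ and the relabelling of boundary points encode the correct sign and inverse automatically.
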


If follows that $[\sigma_{+} \circ \tau (K)]=-q[K]$ and $[\sigma_{-} \circ \tau (K)]=-q[K]$.

\paragraph{Diffeomorphism moves for band diagrams}

Using the transformation between disk and band diagrams (see Figures \ref{LB} and \ref{BL}) developed in  \cite{GM}, we can recover the diffeomorphism moves also on band diagrams. We already described the effect of the $\tau$ move in Figure \ref{pt}, while the effect of $\sigma_{+}$ and $\sigma_{-}$ is depicted in Figures~\ref{bandmove+} and~\ref{bandmove-}.

 \begin{figure}[h!]  
\center            
\includegraphics[width=12cm]{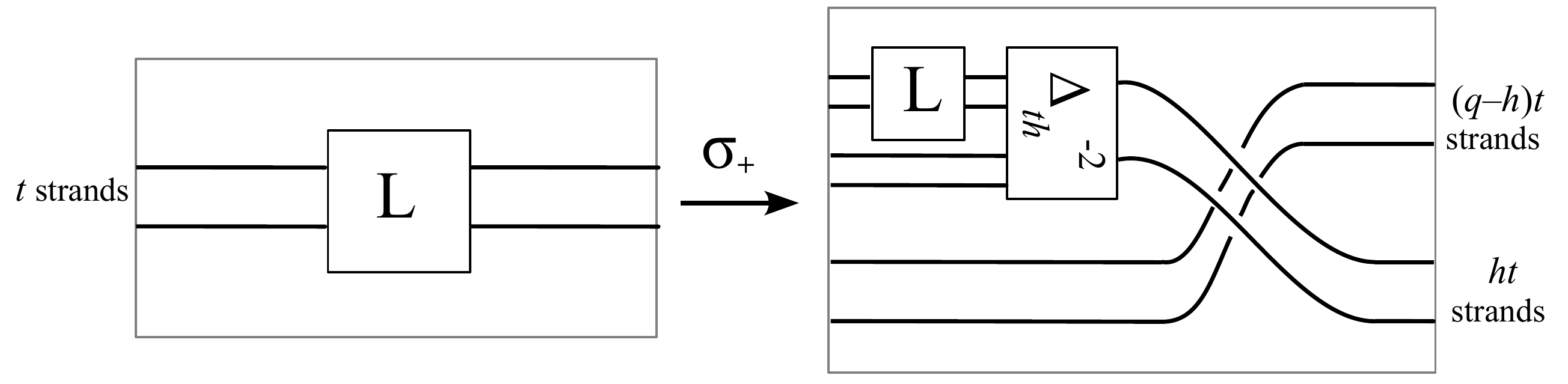}
\caption[legenda elenco figure]{Effect of $\sigma_{+}$ on a band diagram.}\label{bandmove+}
\end{figure}

\begin{figure}[h!]          
\center            
\includegraphics[width=11cm]{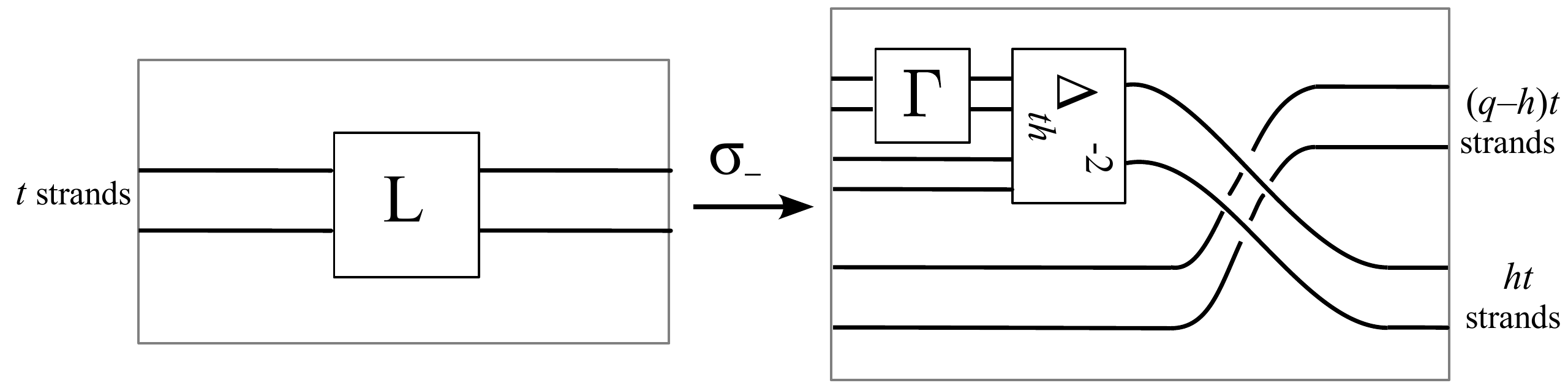}
\caption[legenda elenco figure]{Effect of $\sigma_{-}$ on a band diagram.}\label{bandmove-}
\end{figure}

\paragraph{Diffeomorphism moves for grid diagrams}

As for band diagram, the diffeo moves for grid diagrams can be recovered  using the transition moves between disk and grid diagrams  depicted in Figures \ref{GL} and \ref{LG} and developed in \cite{CMR}. Nevertheless, since the grid rectangle gives rise, after the identifications on the boundary, to the Heegaard torus of the splitting used in the definition of       $\tau$, $\sigma_{+}$ and $\sigma_{-}$, it is very easy to see directly how the three diffeomorphisms  act on it. Indeed,  if  we  number the  columns (resp. row)  from $1$ to $n$  left to right  (resp. up to down)   and the cells inside each column (resp. row) from $1$ to $np$ starting from the top-left one (resp. left one), we obtain the following description:

\begin{itemize}
 \item[$\tau$:] its restriction to each meridian disk is a symmetry along a diameter, so, in the diagram,  we have a left-right exchange inside each box and an up-down exchange along each column;  a marking in the $k$-th cell of the $j$-th column goes into the $(np-k+1)$-th cell of the  $(n-j+1)$-th column (see  Figure \ref{g1});  on the whole, up to a cyclic permutation of the boxes that does not change the represented link, we have a $\pi$ degree rotation around the center of the rectangle (the big dot in the figure);  
 \item[$\sigma_+$:] it exchanges the two solid tori and, consequently,  the corresponding meridian disks,  so, in the diagram, the rows exchange with the columns; a marking contained in the $k$-cell of the $j$-th row goes into the $k$-th cell of the   $j$-th column  (see  Figure \ref{g2}); 
 \item[$\sigma_-$:] it exchanges the meridian disks  of the two solid torus and reflects one of them along a diameter, so, in the diagram, the rows exchange with the columns is followed by a left-right exchange inside each box; a marking contained in the $k$-cell of the $j$-th row goes into the $k$-th cell of the   $(n-j+1)$-th column  (see  Figure \ref{g3});
\end{itemize}

\begin{figure}[h!]  
\center            
\includegraphics[width=9cm]{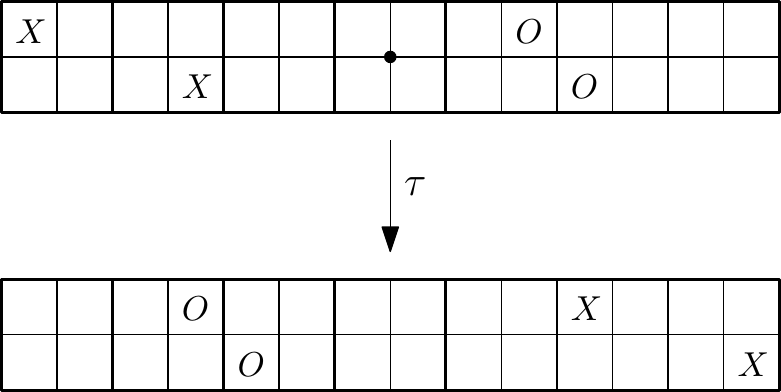}
\caption[legenda elenco figure]{Effect of $\tau$ on a grid diagram in $L(7,2)$.}\label{g1}
\end{figure}

\begin{figure}[h!]  
\center            
\includegraphics[width=9cm]{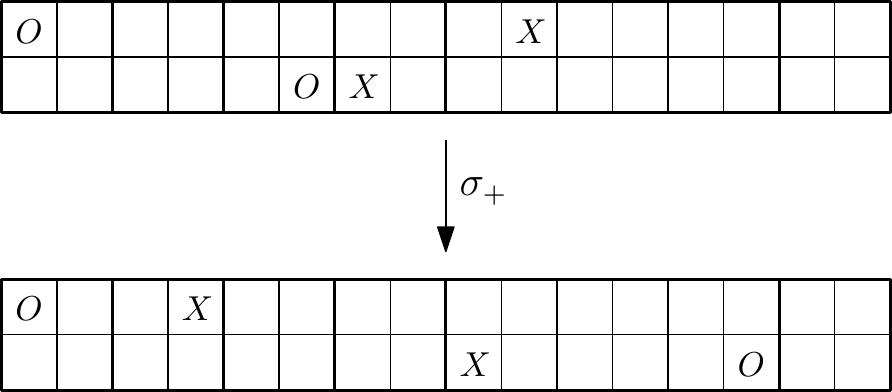}
\caption[legenda elenco figure]{Effect of $\sigma_+$ on a grid diagram in $L(8,3)$.}\label{g2}
\end{figure}

\begin{figure}[h!]  
\center            
\includegraphics[width=9cm]{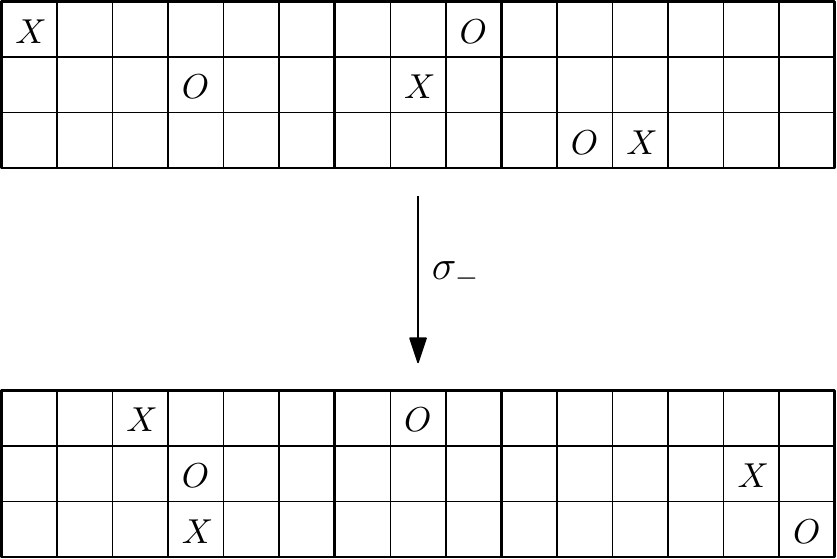}
\caption[legenda elenco figure]{Effect of $\sigma_{-}$ on a grid diagram in $L(5,2)$.}\label{g3}
\end{figure}


\section{Lift}

The lift of a link $L \subset L(p,q)$ is the pre-image under the cyclic covering $p: \s3 \to L(p,q)$, that is $p^{-1}(L)=\widetilde{L}$. The lift is a strong isotopy invariant of links in lens spaces, and using the knowledge of links in the $3$-sphere it is possible to distinguish a wide range of links in lens spaces. In this section we will discuss when the lift can distinguish links in lens spaces, both up to isotopy and up diffeomorphism.

\paragraph{Effect of the diffeomorphism moves on the lift}
Before examining how the lift changes under  the diffeomorphism moves, we have to set notations and recall some important facts.

First of all,  if  $L_{1}, \ldots, L_{\nu}$ are the components of  a link $L\subset L(p,q)$, and $\delta_{i}=[L_{i}]$ is  the homology class of  $L_i$, the number of components of  the lift $\widetilde{L}$ is $ \sum_{i=1, \ldots, \nu} \gcd (\delta_{i},p)$.
A knot in $L(p,q)$ that lifts to a knot (i.e., having only one component) is said to be \textit{primitive-homologous}, since its homology class is co-prime with $p$. Moreover, for each kind of representation, a  description of how to find a diagram for  $\widetilde{L}$ starting from that of  $L$  is given: see  \cite{M1} for disk diagrams,  \cite{BGH} for grid diagrams and   \cite{GM} for band diagrams.


If $L$ is a link in $\s3$, let $m(L)$ be the mirror image of $L$. Clearly $L$ and $m(L)$ are diffeo equivalent in $\s3$ but not always isotopy equivalent: if they are also isotopy equivalent $L$ is said to be \textit{amphicheiral}.

\begin{prop}\label{liftmoves}
The effects of the diffeomorphisms on the lift of a link \mbox{$L \subset L(p,q)$} are the following:
\begin{itemize}\itemsep-3pt
\item $\widetilde{\tau(L)}=\widetilde{L}$;
\item $\widetilde{\sigma_{+}(L)}=\widetilde{L}$;
\item $\widetilde{\sigma_{-}(L)}=m(\widetilde{L})$.
\end{itemize}
\end{prop}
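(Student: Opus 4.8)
The plan is to pass to the universal cover. Since $\s3$ is simply connected, every self-diffeomorphism $h$ of $L(p,q)$ lifts to a diffeomorphism $\widehat h$ of $\s3$ with $p\circ\widehat h=h\circ p$ (the inverse of $\widehat h$ is a lift of $h^{-1}$), and from this relation one reads off at once
\[
\widehat h\bigl(p^{-1}(L)\bigr)=p^{-1}\bigl(h(L)\bigr),\qquad\text{that is,}\qquad \widetilde{h(L)}=\widehat h(\widetilde L).
\]
Thus the lift of $h(L)$ is the image of the lift of $L$ under a self-diffeomorphism of $\s3$, and since $\widetilde L$ is defined only up to ambient isotopy in $\s3$, only the isotopy class of $\widehat h$ matters. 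Recall that $\pi_0(\mathrm{Diff}(\s3))\cong\z2$, generated by a reflection: an orientation-preserving diffeomorphism of $\s3$ is isotopic to the identity, an orientation-reversing one to the standard reflection, which carries a link to its mirror image. Fixing orientations on $\s3$ and $L(p,q)$ so that $p$ is orientation-preserving — possible because $L(p,q)$ is orientable, so the deck transformations preserve the orientation of $\s3$ — the relation $p\circ\widehat h=h\circ p$ shows $\widehat h$ is orientation-preserving if and only if $h$ is. Hence the proposition reduces to: $\tau$ and $\sigma_+$ are orientation-preserving on $L(p,q)$, while $\sigma_-$ is orientation-reversing.

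To settle this I would use the explicit model $\s3=\{(z_1,z_2)\in\C^2:|z_1|^2+|z_2|^2=1\}$ with the free, orientation-preserving $\z{p}$-action generated by $g(z_1,z_2)=(\zeta z_1,\zeta^{q}z_2)$, $\zeta=e^{2\pi i/p}$, whose quotient is $L(p,q)$ (the precise convention among $q,-q,q^{\pm1}$ is immaterial here, as the hypotheses $q^2\equiv\pm1\pmod p$ and the oriented homeomorphism type are insensitive to it); the two solid tori $\{|z_1|\leqslant|z_2|\}$, $\{|z_1|\geqslant|z_2|\}$ of the genus-one Heegaard splitting of $\s3$ are $g$-invariant and descend to $V_1,V_2$. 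One then checks that complex conjugation $c(z_1,z_2)=(\bar z_1,\bar z_2)$ satisfies $c\circ g\circ c=g^{-1}$, hence descends; it preserves each solid torus acting by conjugation in both coordinates, so it realizes $\tau$; as an element of $\mathrm{O}(4)$ it lies in $\mathrm{SO}(4)$, so $\tau$ is orientation-preserving. The swap $s(z_1,z_2)=(z_2,z_1)$ satisfies $s\circ g\circ s=g^{q}$, which lies in $\langle g\rangle$ exactly when $q^2\equiv1\pmod p$ — the condition under which $\sigma_+$ exists (Theorem~\ref{diffeotopy}); it then descends to a diffeomorphism exchanging $V_1,V_2$ acting by $(u,v)\leftrightarrow(u,v)$, i.e.\ to $\sigma_+$ (at worst to $\sigma_+\circ\tau$), and since $s\in\mathrm{SO}(4)$ and $\tau$ is orientation-preserving, $\sigma_+$ is orientation-preserving. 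Finally the twisted swap $r(z_1,z_2)=(\bar z_2,z_1)$ satisfies $r\circ g\circ r^{-1}=g^{-q}$, which lies in $\langle g\rangle$ exactly when $q^2\equiv-1\pmod p$; it descends to a diffeomorphism exchanging $V_1,V_2$ acting by $(u,v)\leftrightarrow(\bar u,v)$, i.e.\ to $\sigma_-$ (at worst $\sigma_-\circ\tau$), and as an element of $\mathrm{O}(4)$ it has determinant $-1$, so $\sigma_-$ is orientation-reversing. Combining with the first paragraph yields $\widetilde{\tau(L)}=\widetilde L$, $\widetilde{\sigma_+(L)}=\widetilde L$, and $\widetilde{\sigma_-(L)}=m(\widetilde L)$.

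The one place demanding care — and the step I expect to be the main obstacle — is the middle of the last paragraph: correctly matching the concrete maps $c,s,r$ on $\s3$ with the coordinate descriptions $\tau(u,v)=(\bar u,\bar v)$, $\sigma_+\colon(u,v)\leftrightarrow(u,v)$, $\sigma_-\colon(u,v)\leftrightarrow(\bar u,v)$ of Theorem~\ref{dm}, so as to be sure $c,s,r$ descend to the intended isotopy classes — here it helps that on $\{|z_1|\leqslant|z_2|\}$ the core circle is $z_2/|z_2|$ and the meridian-disk coordinate is $z_1$, which makes the dictionary transparent; and in any case an extra factor of $\tau$ would not affect the conclusion, since $\tau$ is orientation-preserving. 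An alternative, more computational route avoids the covering-space formalism: take the band-diagram description of $\widetilde L$ from \cite{GM} and track how the diffeo band moves of Figures~\ref{pt}, \ref{bandmove+}, \ref{bandmove-} act on it — e.g.\ the $\tau$-move ``mirror the planar diagram and switch every crossing'' is an orientation-preserving symmetry of $\s3$ and so changes neither a planar diagram's link type nor that of its lift — but the covering-space argument above seems shorter and conceptually cleaner.
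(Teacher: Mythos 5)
Your proof is correct and follows essentially the same route as the paper's: lift each diffeomorphism to $\s3$, use that the isotopy class of a diffeomorphism of $\s3$ is detected by orientation, and check that $\tau,\sigma_{+}$ preserve orientation while $\sigma_{-}$ reverses it. The paper's proof is a one-sentence version of this argument; your explicit $\mathrm{O}(4)$ verification of the orientation behaviour of $\tau$, $\sigma_{+}$ and $\sigma_{-}$ merely supplies details the paper leaves implicit.
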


\begin{proof}
The ambient isotopies of the link equivalence are preserved when they lift from $L(p,q)$ to $\s3$, so the statement is immediate if we consider how the diffeomorphism of $L(p,q)$ lift to $\s3$: $\tau$ and $\sigma_{+}$ are orientation preserving and lift to the identity, while $\sigma_{-}$ is orientation reversing and lifts to the orientation reversing diffeomorphism of $\s3$.
\end{proof}

\paragraph{Completeness of the lift}

Since the lift of a link in a lens space comes from a cyclic covering,  a natural question to ask is whether it is a complete invariant, at least for some family of knots. We analyze the case of oriented primitive-homologous knots when they are considered up to diffeo equivalence, using the following theorem of Sakuma, also proved by Boileau and Flapan, about freely periodic knots.

A knot $K$ in $\s3$ is said to be freely periodic if there is a free finite cyclic action on $\s3$ that fix $K$. Clearly the quotient under this action is a lens space with a knot inside it that lifts to $K$.
In this case, if $\textrm{Diff}^{*}(\s3,K)$ denotes the group of diffeomorphisms of the pair $(\s3,K)$, which preserves the orientation of both $\s3$ and $K$, up to isotopy, we say that a \textit{symmetry} $G$ of a knot $K$ in $\s3$ is a finite subgroup of $\textrm{Diff}^{*}(\s3,K)$, up to conjugation.

\begin{teo}{\upshape{\cite{BF,Sa}}}\label{hompair}
Suppose that a knot \mbox{$K\subset \s3$} has free period $p$. Then there is a unique symmetry $G$ of $K$ realizing it, provided that (i) $K$ is prime, or (ii) $K$ is composite and the slope is specified.
\end{teo}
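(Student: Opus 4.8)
The plan is to prove this by reducing to the geometric (JSJ plus hyperbolic) decomposition of the knot exterior and then running the now-standard equivariant geometrization machinery. Write $E=E(K)=\s3\setminus\mathrm{int}\,N(K)$ for the exterior. A free period $p$ is the same datum as a free action of $\z{p}$ on $\s3$ leaving the oriented knot $K$ invariant; after an isotopy such an action preserves a tubular neighbourhood $N(K)$ and restricts to a free $\z{p}$-action on $E$ that, on $\partial E\cong T^2$, preserves the meridian up to isotopy. The first reduction is to observe that classifying free periods of $K$ up to conjugacy in $\mathrm{Diff}^{*}(\s3,K)$ is equivalent to classifying these boundary-compatible free $\z{p}$-actions on $E$ up to conjugacy in $\mathrm{Diff}(E)$: the extension back across $N(K)\cong\s1\times B^2$ is unique because an orientation-preserving free $\z{p}$-action on a solid torus that is a rotation on the boundary is standard.

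Next comes an equivariant JSJ step. By the equivariant torus theorem (Meeks--Yau, together with the equivariant refinements of the Jaco--Shalen--Johannson theory, or equivalently by applying finite-group geometrization to the Haken manifold $E$), the canonical collection of JSJ tori of $E$ can be isotoped to be invariant under the $\z{p}$-action; canonicity of the JSJ system then forces any two free periods to be arranged so as to preserve the same tori. This organizes $E$ into its companion tree of pieces, each of which is either Seifert fibered or atoroidal and non-Seifert, hence hyperbolic with geodesic boundary by Thurston's hyperbolization theorem for Haken manifolds, and on each of which $\z{p}$ acts.

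The heart of the argument is then a piecewise rigidity analysis together with an assembly along the tree. On a hyperbolic piece, Mostow--Prasad rigidity puts the $\z{p}$-action into a canonical isometric form and, the isometry group being finite, limits the possible free actions compatible with the neighbouring pieces. On a Seifert piece one isotopes the action to respect the Seifert fibration --- here one must separately handle the exceptional small pieces ($\s1\times B^2$, $T^2\times I$, the twisted $I$-bundle over the Klein bottle) arising from torus, cable and connected-sum structure, where the fibration is not unique --- after which the induced finite action on the base orbifold is essentially rigid. One then propagates uniqueness outward along the companion tree starting from the boundary piece: since $\z{p}$ acts freely on $\partial E\cong T^2$ preserving the meridian it is there a standard rotation, essentially unique, and at each JSJ torus the extension to the adjacent piece is pinned down by the rigidity above together with the requirement that the two actions agree on the gluing torus. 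This yields uniqueness for prime $K$.

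For composite $K=K_1\#\cdots\#K_n$ the outermost JSJ piece of $E$ is a composing space, and a free period may cyclically permute the prime factors; the conjugacy class of the action then also depends on the framings --- the slopes --- along which the factors are reglued, so that without specifying the slope one genuinely obtains several non-conjugate free periods, whereas fixing the slope removes this ambiguity and restores uniqueness, exactly as in case (ii). The main obstacle in carrying all of this out is the Seifert and small-piece bookkeeping in the third step, combined with the equivariant gluing: one must control not merely the conjugacy class of the action on each piece but also its restriction to every JSJ torus --- the fiber direction, the induced action on the base orbifold, and the Euler number data --- and show these all match across the gluings; this is precisely where the delicacy lies and where the prime-versus-composite distinction becomes essential.
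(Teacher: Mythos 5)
First, a point of order: the paper does not prove this statement. Theorem~\ref{hompair} is quoted from Boileau--Flapan \cite{BF} and Sakuma \cite{Sa} and is used as a black box, so there is no internal proof to compare yours against. What can be said is how your outline relates to the arguments in those references: your skeleton --- reduce to free $\z{p}$-actions on the exterior $E(K)$, make the JSJ tori equivariant via Meeks--Yau, geometrize the pieces, and assemble along the companion tree, with the composing space accounting for the slope ambiguity in the composite case --- is indeed the strategy of the cited papers.

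As a proof, however, the decisive steps are asserted rather than established. (1) In the reduction to $E=E(K)$ you pass to conjugacy in $\mathrm{Diff}(E)$; to pull a conjugating diffeomorphism of $E$ back to one of $(\s3,K)$ it must carry the meridian to the meridian, which requires $K$ to be nontrivial (knot complement theorem). This is not a pedantic point: the unknot has, for each $p$, pairwise non-conjugate free periods with quotients $L(p,q)$ for varying $q$, so the hypothesis must enter somewhere, and in your write-up it never does. (2) On a hyperbolic piece Mostow rigidity only conjugates the action into the finite isometry group; the theorem amounts to showing that any two free $\z{p}$-subgroups of that group inducing the required behaviour on the peripheral tori are conjugate, and this does not follow from finiteness --- a finite group can contain non-conjugate cyclic subgroups of the same order. ``Limits the possible free actions'' is a restatement of the problem, not a solution. (3) The assembly step is where \cite{BF} and \cite{Sa} do most of their work: the piecewise conjugating diffeomorphisms must be corrected in collars of the JSJ tori so as to match, and one must control possible permutations of pieces and of Seifert fiber/base data under a global conjugation; you correctly identify this as the main obstacle but leave it unresolved. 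Since the statement is being imported as a citation, the honest course is to keep it as such rather than to supply a sketch whose gaps sit exactly at the points where the cited proofs are nontrivial.
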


If we translate this theorem into the language of knots in lens spaces, specifying  the slope is equivalent to fixing the parameter $q$ of the lens space. As a consequence, two primitive-homologous knots $K_1$ and $K_2$ in $L(p,q)$ with diffeo equivalent non-trivial lifts are necessarily diffeo equivalent in $L(p,q)$. 

Combining Proposition~\ref{liftmoves} and Theorem~\ref{hompair} it is possible to obtain a similar statement up to isotopy.

\begin{teo}\label{liftteo}
Let $K_{1}$ and $K_{2}$ be two oriented primitive-homologous knots in the lens space $L(p,q)$ such that they have isotopy equivalent lifts  not being the trivial knot.
In the case that their lift is amphicheiral:
\begin{itemize}\itemsep-3pt
\item if $p=2$, then $K_{2}$ is isotopy equivalent either to $K_{1}$ or to $\sigma_{-}(K_{1})$; 
\item if  $q \equiv \pm 1 \mod p$ and $p \neq 2$, then $K_{2}$ is isotopy equivalent either to $K_{1}$ or to $\tau(K_{1})$;
\item if  $q^2 \equiv + 1 \mod p$ and $q \not \equiv \pm 1 \mod p$, then $K_{2}$ is isotopy equivalent to one of the following: $K_{1}$, $\tau(K_{1})$, $\sigma_{+}(K_{1})$, $\tau(\sigma_{+}(K_{1}))$; 
\item if $q^2 \equiv - 1 \mod p$ and $p \neq 2$, then $K_{2}$ is isotopy equivalent to one of the following: $K_{1}$, $\sigma_{-}(K_{1})$, $\tau(K_{1})$, $\sigma_{-}(\tau(K_{1}))$; 
\item if $q^{2} \not \equiv \pm 1 \mod p$, then $K_{2}$ is isotopy equivalent either to $K_{1}$ or to $\tau(K_{1})$.
\end{itemize}

In the case that their lift is not amphicheiral:

\begin{itemize}\itemsep-3pt
\item if $p=2$, then $K_{2}$ is isotopy equivalent to $K_{1}$; 
\item if  $q \equiv \pm 1 \mod p$ and $p \neq 2$, then $K_{2}$ is isotopy equivalent either to $K_{1}$ or to $\tau(K_{1})$;
\item if  $q^2 \equiv + 1 \mod p$ and $q \not \equiv \pm 1 \mod p$, then $K_{2}$ is isotopy equivalent to one of the following: $K_{1}$, $\tau(K_{1})$, $\sigma_{+}(K_{1})$, $\tau(\sigma_{+}(K_{1}))$; 
\item if $q^2 \equiv - 1 \mod p$ and $p \neq 2$, then $K_{2}$ is isotopy equivalent either to $K_{1}$ or to $\tau(K_{1})$; 
\item if $q^{2} \not \equiv \pm 1 \mod p$, then $K_{2}$ is isotopy equivalent either to $K_{1}$ or to $\tau(K_{1})$.
\end{itemize}
\end{teo}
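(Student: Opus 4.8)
The plan is to combine the classification of the diffeotopy group (Theorem~\ref{diffeotopy}), the behaviour of the lift under the three generators (Proposition~\ref{liftmoves}), and the rigidity statement for freely periodic knots deduced from Theorem~\ref{hompair}. The starting point is the observation, already recorded after Theorem~\ref{hompair}, that if $K_1$ and $K_2$ are primitive-homologous knots in $L(p,q)$ with \emph{diffeo equivalent} non-trivial lifts, then $K_1$ and $K_2$ are diffeo equivalent in $L(p,q)$; hence there is a diffeomorphism $h\colon L(p,q)\to L(p,q)$ with $h(K_1)=K_2$. By Theorem~\ref{diffeotopy}, $h$ is isotopic to one of the elements of the (finite) diffeotopy group, so $K_2$ is isotopy equivalent to $\gamma(K_1)$ for some $\gamma$ in that group. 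This already yields, in each of the five cases, the \emph{a priori} list of candidates obtained by letting $\gamma$ run over the generators and their products: $\{K_1,\sigma_-(K_1)\}$ when $p=2$ (note $\sigma_-^2=\tau$ is isotopic to the identity here since $q\equiv\pm1$); $\{K_1,\tau(K_1)\}$ when $q\equiv\pm1\bmod p$, $p\neq2$; the four-element list $K_1,\tau(K_1),\sigma_+(K_1),\tau(\sigma_+(K_1))$ when $q^2\equiv+1$, $q\not\equiv\pm1$; the four-element list $K_1,\sigma_-(K_1),\tau(K_1),\sigma_-(\tau(K_1))$ when $q^2\equiv-1$, $p\neq2$; and $\{K_1,\tau(K_1)\}$ when $q^2\not\equiv\pm1$. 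This is exactly the statement in the amphicheiral case, so for that half of the theorem nothing more is needed.

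For the non-amphicheiral case the refinement comes from Proposition~\ref{liftmoves}: $\widetilde{\tau(L)}=\widetilde{\sigma_+(L)}=\widetilde{L}$, whereas $\widetilde{\sigma_-(L)}=m(\widetilde{L})$. The hypothesis is that $K_1$ and $K_2$ have \emph{isotopy} equivalent lifts; in particular their lifts are diffeo equivalent, so the argument of the previous paragraph applies and $K_2\simeq\gamma(K_1)$ for some $\gamma$ in the diffeotopy group. Now I would impose the constraint $\widetilde{K_2}=\widetilde{\gamma(K_1)}$ is isotopic to $\widetilde{K_1}$. Tracking $\gamma$ through Proposition~\ref{liftmoves}, any $\gamma$ that involves an odd number of $\sigma_-$ factors sends $\widetilde{K_1}$ to $m(\widetilde{K_1})$; since $\widetilde{K_1}$ is not amphicheiral, $m(\widetilde{K_1})$ is not isotopic to $\widetilde{K_1}$, so such a $\gamma$ cannot occur. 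Concretely: when $p=2$ the only non-trivial element is $\sigma_-$, which is excluded, leaving only $K_2\simeq K_1$; when $q^2\equiv-1$, $p\neq2$, the diffeotopy group is $\z4=\langle\sigma_-\rangle$ with elements $\mathrm{id},\sigma_-,\sigma_-^2=\tau,\sigma_-^3=\sigma_-\tau$, and exactly $\sigma_-$ and $\sigma_-^3$ are excluded, leaving $\{K_1,\tau(K_1)\}$. In the remaining three cases ($q\equiv\pm1$, $q^2\equiv+1$ with $q\not\equiv\pm1$, and $q^2\not\equiv\pm1$) the group is generated by $\tau$ and possibly $\sigma_+$, neither of which changes the lift, so no candidate is removed and the list is the same as in the amphicheiral case — which is precisely what the theorem asserts.

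The main point requiring care — and the step I expect to be the principal obstacle — is the passage from "diffeo equivalent in $L(p,q)$'' to "isotopy equivalent to $\gamma(K_1)$ for a \emph{specific} generator $\gamma$'': one must be sure that the coset structure is read off correctly, i.e. that every diffeomorphism of the pair is isotopic, as a diffeomorphism of $L(p,q)$, to one of the listed $\gamma$, and that this ambient isotopy can be taken to carry $\gamma(K_1)$ to $K_2$. This is where Theorem~\ref{diffeotopy} is used in full strength, together with the definition of isotopy equivalence of links (the ambient isotopy $h_t$ with $h_0=\mathrm{id}$, $h_1$ the diffeomorphism of pairs): writing $h=h_1'\circ\gamma$ with $h_1'$ isotopic to the identity, the isotopy $h_t'$ applied to $\gamma(K_1)$ gives $K_2\simeq\gamma(K_1)$. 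A secondary subtlety is the degenerate behaviour of the generators in low cases — for $p=2$ one has $q=1$, so $\tau$ is isotopic to the identity and $\sigma_-^2$ is trivial in the diffeotopy group, which is why the $p=2$ lists have only two (resp. one) entries; I would state this explicitly to avoid apparent inconsistency with the four-element lists. Finally, one should note that Theorem~\ref{hompair}(ii), invoked for composite lifts, is exactly what makes "fixing $q$'' (equivalently, specifying the slope) sufficient to pin down the symmetry, so the argument is uniform in the prime and composite cases.
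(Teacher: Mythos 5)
Your proposal is correct and follows essentially the same route as the paper's proof: deduce diffeo equivalence of $K_1$ and $K_2$ from Theorem~\ref{hompair}, enumerate the candidates via the diffeotopy group of Theorem~\ref{diffeotopy}, and in the non-amphicheiral case exclude the elements involving $\sigma_-$ using Proposition~\ref{liftmoves}. The only difference is one of exposition: the paper writes out only the $L(2,1)$ case and declares the rest ``similar,'' whereas you carry the case analysis through all five families explicitly.
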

\begin{proof}
Let us analyze just the case of $L(2,1)$ since the other cases can be treated similarly.
If $\widetilde{K_{1}}\cong \widetilde{K_{2}}$, then by Theorem~\ref{hompair} we can assume that $K_{1}$ and $K_{2}$ are diffeo equivalent, that is, by Proposition~\ref{dm}, $K_{2}$ is isotopy equivalent either to $K_{1}$ or to $\sigma_{-}(K_{1})$. Moreover, if $\widetilde{K_{1}}\cong \widetilde{K_{2}}$ is not amphicheiral, that is to say, not equivalent to its mirror image, by Proposition~\ref{liftmoves} we know that $\widetilde{\sigma_{-}(K_{1})}=m(\widetilde{K_{1}})=m(\widetilde{K_{2}})$ and thus $K_{2}$ is necessarily isotopy equivalent to $K_{1}$.
\end{proof}

\vspace{2mm}
\textit{Acknowledgments:} The authors would like to thank  Michele Mulazzani for its useful suggestions.


\vspace{15 pt} {ALESSIA CATTABRIGA, Department of Mathematics,
University of Bologna, ITALY. E-mail: alessia.cattabriga@unibo.it}

\vspace{15 pt} {ENRICO MANFREDI, Department of Mathematics,
University of Bologna, ITALY. E-mail: enrico.manfredi3@unibo.it}



\begin{thebibliography}{DGKT}
\rhead[\fancyplain{}{\bfseries \leftmark}]{\fancyplain{}{\bfseries \thepage}}
\itemsep-4pt \smaller

\bibitem{BG} K. Baker, J. E. Grigsby, \textit{Grid diagrams and Legendrian lens space links},  J. Symplectic Geom. \textbf{7} (2009) 415--448. 


\bibitem{BGH} K.\@ Baker, J.\@ E.\@ Grigsby, M.\@ Hedden, \textit{Grid diagrams for lens spaces and combinatorial knot Floer homology}, Int.\@ Math.\@ Res.\@ Not.\@ IMRN \textbf{10} (2008), Art.\@ ID rnm024, 39 pp.


\bibitem{BHW} S.\@ A.\@ Bleiler, C.\@ Hodgson, J.\@ R.\@ Weeks, \textit{Cosmetic surgery on knots}, Proceedings of the Kirbyfest (Berkeley, CA, 1998), 23--34 (electronic), Geom.\@ Topol.\@ Monogr., 2, Geom.\@ Topol.\@ Publ., Coventry, 1999.


\bibitem{BF} M.\@ Boileau, E.\@ Flapan, \textit{Uniqueness of free actions on $\s3$ respecting a knot}, Canad.\@ J.\@ Math.\@ \textbf{39} (1987), 969--982.

\bibitem{Bo} F.\@ Bonahon, \textit{Diff\'eotopies des espaces lenticulaires}, Topology \textbf{22} (1983), 305--314.


\bibitem{BM} D.\@ Buck, M.\@ Mauricio, \textit{Connect sum of lens spaces surgeries: application to Hin recombination}, Math.\@ Proc.\@ Cambridge Philos.\@ Soc.\@ \textbf{150} (2011), 505--525.

\bibitem{CMM} A.\@ Cattabriga, E.\@ Manfredi, M.\@ Mulazzani, \textit{On knots and links in lens spaces}, Topology Appl.\@ \textbf{160} (2013), 430--442.

\bibitem{CMR} A.\@ Cattabriga, E.\@ Manfredi, L.\@ Rigolli, \textit{Equivalence of two diagram representations of links in lens spaces and essential invariants}, Acta Math. Hung. \textbf{146} (2015), 168--201.

\bibitem{C} A.\@  Christensen, \textit{A Gordon-Luecke-type argument for knots in lens spaces}, Topology, \textbf{37} (1998), 935--944. 

\bibitem{Co} C.\@  Cornwell, \textit{A polynomial invariant for links in lens spaces},
J. Knot Theory Ramifications \textbf{21}  (2012), 1250060.  





\bibitem{Dr} Y.\@ V.\@ Drobotukhina, \emph{An analogue of the Jones polynomial for links in $\mathbb{R}P^{3}$ and a generalization of the Kauffman-Murasugi theorem}, Leningrad Math.\@ J.\@ \textbf{2} (1991), 613--630.






\bibitem{Gb} B.\@ Gabrov\v{s}ek, \textit{Classification of knots in lens spaces}, Ph.D.\@ thesis, University of Ljubljana, Slovenia, 2013, \texttt{http://lab.fs.uni-lj.si/matematika/wordpress/wp-content/uploads/2015/} \texttt{01/Gabrovsek-Thesis.pdf}.


\bibitem{Gb2} B.\@ Gabrov\v{s}ek, \textit{Tabulation of prime knots in lens spaces}, to appear in Meditter. J. Math. (2017), 	arXiv:1611.07006.


\bibitem{GM} B. Gabrov\v{s}ek, E. Manfredi, \textit{On the KBSM of links in lens spaces}, preprint (2016) arXiv:1506.01161.

\bibitem{GM1} B.\@ Gabrov\v{s}ek, E.\@ Manfredi, \textit{On the Seifert fibered space link group}, Topol. Appl. \textbf{206} (2016), 255--275.

\bibitem{GM2} B.\@ Gabrov\v{s}ek, M.\@ Mroczkowski, \textit{The HOMFLY-PT skein module of the lens space $L(p,1)$}, Topol. Appl. 
\textbf{175} (2014), 72--80.
\bibitem{Ga} F.\@ Gainullin, \textit{Heegaard Floer homology and knots determined by their complements}, preprint (2015),
arXiv: 1504:06180.





\bibitem{G1} D.\@ V.\@ Gorkovets, \textit{Distributive groupoids for knots in projective space}, Vestn.\@ Chelyab.\@ Gos.\@ Univ.\@ Mat.\@ Mekh.\@ Inform.\@ \textbf{6}/10 (2008), 89--93.

\bibitem{G2} D.\@ V.\@ Gorkovets, \textit{Cocycle invariants for links in projective space}, Vestn.\@ Chelyab.\@ Gos.\@ Univ.\@ Mat.\@ Mekh.\@ Inform.\@ \textbf{23}/12 (2010), 88--97.



\bibitem{He} M.\@ Hedden, \textit{On Floer homology and the Berge conjecture on knots admitting lens space surgeries}, Trans.\@ Amer.\@ Math.\@ Soc.\@ \textbf{363} (2011), 949--968.\@ 

\bibitem {Heg} P. Heegaard,  Forstudier til en topologisk Teori for de algebraiske Fladers Sammenhaeng. \textit{Dissertation}
(Cophenhagen, 1898).

\bibitem{HR} C.\@ Hodgson, J.\@ H.\@ Rubinstein, \textit{Involutions and isotopies of lens spaces}, in \textit{Knot theory and manifolds (Vancouver, B.C., 1983)}, 60--96, Lecture Notes in Math., 1144, Springer, Berlin, 1985.\@ 


\bibitem{HP} J.\@ Hoste, J.\@ H.\@ Przytycki, \textit{The $(2,\infty)$-skein module of lens spaces; a generalization of the Jones polynomial}, J.\@ Knot Theory Ramifications \textbf{2} (1993), 321–333.\@ 







\bibitem{M1} E.\@ Manfredi, \textit{Lift in the $3$-sphere of knots and links in lens spaces}, J. Knot Theory Ramifications \textbf{23} (2014), 1450022, 21 pp.

\bibitem{Ma} Y. Mathieu, \textit{Closed 3-manifolds unchanged by Dehn surgery}, J. Knot Theory Ramifications
\textbf{1} (1992), 279--296.

\bibitem{Mat} D.\@ Matignon, \textit{On the knot complement problem for non-hyperbolic knots}, Topology Appl.
\textbf{157} (2010), 1900--1925.







\bibitem{Sa} M.\@ Sakuma, \textit{Uniqueness of symmetries of knots}, Math.\@ Z.\@ \textbf{192} (1986), 225--242.


\bibitem{Ste} S.\@ Stevan, \textit{Torus Knots in Lens Spaces \& Topological Strings}, Ann. Henry Poincar\'e \textbf{16} (2015), 1937--1967.

\bibitem{T} H.\@  Tietze, \textit{Ueber die topologischen Invarianten mehrdimensionaler Mannigfaltigkeiten}, Monatsh. fuer Math. und Phys. \textbf{19} (1908), 1--118.






\end{thebibliography}
\end{document}